\newcommand{\Rn}{\mathbb{R}^n}
\renewcommand{\S}{\mathbb{S}^{n-1}}
\newtheorem{thm}{Theorem}
\newtheorem{lem}[thm]{Lemma}
\newtheorem{prop}[thm]{Proposition}
\newcommand{\dif}{\,\mathrm{d}}
\begin{document}

\title[Borderline Bourgain-Brezis Sobolev embedding theorem]{Variations on a proof of a borderline Bourgain--Brezis Sobolev embedding theorem}
\author{Sagun Chanillo}
\address{Department of Mathematics\\ Rutgers, the State University of New
Jersey\\ 110 Frelinghuysen Road\\ Piscataway, NJ 08854, USA}
\email{chanillo@math.rutgers.edu}
\author{Jean Van Schaftingen}
\address{Institut de Recherche en Math\'ematique et en Physique\\ Universit\'e catholique de Louvain\\ Chemin du Cyclotron 2 bte L7.01.01\\ 1348 Louvain-la-Neuve\\ Belgium}
\email{Jean.VanSchaftingen@uclouvain.be}
\author{Po-Lam Yung}
\address{Department of Mathematics\\ the Chinese University of Hong Kong\\
Shatin\\ Hong Kong}
\email{plyung@math.cuhk.edu.hk}

\dedicatory{To Ha\"im Brezis in admiration and friendship}

\begin{abstract}
We offer a variant of a proof of a borderline Bourgain-Brezis Sobolev embedding theorem on $\mathbb{R}^n$. We use this idea to extend the result to real hyperbolic spaces $\mathbb{H}^n$.
\end{abstract}

\maketitle

\section{Introduction}

The Sobolev embedding theorem states that if $\dot{W}^{1,p}(\mathbb{R}^n)$ is the homogeneous Sobolev space, obtained by completing the set of compactly supported smooth functions $C^{\infty}_c(\mathbb{R}^n)$ under the norm $\|\nabla u\|_{L^p(\mathbb{R}^n)}$, then $\dot{W}^{1,p}(\mathbb{R}^n)$ embeds into $L^{p^*}(\mathbb{R}^n)$, whenever $n \geq 2$, $1 \leq p < n$ and $\frac{1}{p^*} = \frac{1}{p} - \frac{1}{n}$. This fails when $p = n$, i.e.\ $\dot{W}^{1,n}(\mathbb{R}^n)$ does not embed into $L^{\infty}(\mathbb{R}^n)$. One of the well-known remedies of this failure is to say that $\dot{W}^{1,n}(\mathbb{R}^n)$ embeds into $\textrm{BMO}(\mathbb{R}^n)$, the space of functions of bounded mean oscillation. In \citelist{\cite{MR1949165}\cite{MR2293957}}, Bourgain and Brezis established another remedy of the failure of this Sobolev embedding for $\dot{W}^{1,n}(\mathbb{R}^n)$. They proved, among other things, that if $X$ is a differential $\ell$-form on $\mathbb{R}^n$ with $\dot{W}^{1,n}(\mathbb{R}^n)$ coefficients, where $1 \leq \ell \leq n-1$, then there exists a differential $\ell$-form $Y$, whose components are all in $\dot{W}^{1,n} \cap L^{\infty} (\mathbb{R}^n)$, such that
$$
d Y = d X,
$$
with
$$
\|Y\|_{\dot{W}^{1,n} \cap L^{\infty}} \leq C \|dX\|_{L^n}.
$$
(Such a theorem would have been trivial by Hodge decomposition, if $\dot{W}^{1,n}(\mathbb{R}^n)$ were to embed into $L^{\infty}(\mathbb{R}^n)$.) The existing proofs of the above theorem are all long and complicated. On the contrary, a weaker version of this theorem, where one replaces the space $\dot{W}^{1,n} \cap L^{\infty}$ by $L^{\infty}$, can be obtained from the following theorem of Van Schaftingen \cite{MR2078071}, when $\ell \leq n-2$:

\begin{thm}[Van Schaftingen \cite{MR2078071}] \label{thm:vs}
Suppose $f$ is a smooth vector field on $\mathbb{R}^n$, with $$\operatorname{div}\, f = 0.$$ Then for any compactly supported smooth vector field $\phi$ on $\mathbb{R}^n$, we have
\begin{equation} \label{eq:mainest}
\left| \int_{\mathbb{R}^n} \langle f , \phi \rangle \right| \leq C \|f\|_{L^1} \|\nabla \phi\|_{L^n},
\end{equation}
where $\langle \cdot, \cdot \rangle$ is the pointwise Euclidean inner product of two vector fields in $\mathbb{R}^n$.
\end{thm}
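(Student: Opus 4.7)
The plan is to use the divergence-free condition on $f$ to integrate by parts, transferring a derivative onto $\phi$, and then apply H\"older's inequality to pair $\nabla\phi\in L^n$ against an $L^{n/(n-1)}$-bounded potential of $f$. By a standard density argument, it suffices to prove \eqref{eq:mainest} for smooth, compactly supported $f$ and $\phi$.

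Since $\operatorname{div} f = 0$ on $\Rn$, the Poincar\'e lemma applied to the closed $(n-1)$-form dual to $f$ produces an antisymmetric matrix-valued potential $F=(F_{ij})$ with $F_{ij}=-F_{ji}$ and $f_i = \sum_{j=1}^n \partial_j F_{ij}$. Substituting this representation and integrating by parts,
\[
\int_{\Rn}\langle f,\phi\rangle = -\sum_{i,j=1}^n \int_{\Rn} F_{ij}\,\partial_j\phi_i,
\]
so H\"older's inequality reduces \eqref{eq:mainest} to the dual estimate
\[
\|F\|_{L^{n/(n-1)}(\Rn)} \leq C\,\|f\|_{L^1(\Rn)}.
\]

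This dual estimate is the main obstacle. The natural choice $F_{ij} = \Delta^{-1}(\partial_j f_i - \partial_i f_j)$ is unusable, since it involves second-order Riesz-type operators which are not bounded on $L^1$, so any successful construction of $F$ must bypass singular-integral theory at the $L^1$ endpoint. My strategy would be to build $F$ anisotropically, from one-dimensional antiderivatives of the components of $f$ along coordinate directions, organised antisymmetrically so that the identity $f_i=\sum_j \partial_j F_{ij}$ is preserved -- the divergence-free hypothesis being precisely what makes such an organisation possible (the compatibility condition for the system $\sum_j \partial_j F_{ij} = f_i$ with $F_{ij}=-F_{ji}$ is exactly $\operatorname{div} f=0$). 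One then controls $\|F\|_{L^{n/(n-1)}(\Rn)}$ via the Loomis-Whitney / sharp Gagliardo-Nirenberg inequality
\[
\|u\|_{L^{n/(n-1)}(\Rn)} \leq \prod_{i=1}^n \|\partial_i u\|_{L^1(\Rn)}^{1/n},
\]
which replaces the failed singular-integral endpoint estimate by a product of slicewise $L^1$ bounds naturally adapted to the anisotropic construction. The delicate technical step -- and the heart of the argument -- is to match the slicewise $L^1$ controls furnished by the coordinate antiderivatives with the $(n-1)$-dimensional marginal norms demanded by Loomis-Whitney, so that the geometric mean on the right reassembles cleanly into the single factor $\|f\|_{L^1(\Rn)}$. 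Once this is in place, combining with the integration-by-parts identity above delivers \eqref{eq:mainest}.
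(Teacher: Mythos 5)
Your reduction is set up correctly: if $F=(F_{ij})$ is antisymmetric with $f_i=\sum_j\partial_j F_{ij}$, then integration by parts and H\"older reduce \eqref{eq:mainest} to the dual estimate $\|F\|_{L^{n/(n-1)}}\leq C\|f\|_{L^1}$, and this dual statement is indeed a well-known \emph{equivalent} formulation of the theorem (by Hahn--Banach it follows from \eqref{eq:mainest}, and your argument shows the converse). The problem is that your sketched route to the dual estimate does not close, and for $n\geq 3$ I don't see how it can. To apply the Loomis--Whitney inequality $\|u\|_{L^{n/(n-1)}}\leq\prod_{k=1}^n\|\partial_k u\|_{L^1}^{1/n}$ to each component $F_{ij}$, you need \emph{all $n$} partials $\partial_k F_{ij}$ in $L^1$ (dropping even one factor makes the inequality false). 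But the system $f_i=\sum_j\partial_j F_{ij}$ only involves a proper subset of these. Concretely, for $n=3$ the equations read $f_1=\partial_2 F_{12}+\partial_3 F_{13}$, $f_2=-\partial_1 F_{12}+\partial_3 F_{23}$, $f_3=-\partial_1 F_{13}-\partial_2 F_{23}$, so of the nine partials $\partial_k F_{ij}$, the three ``diagonal'' ones $\partial_3 F_{12},\partial_2 F_{13},\partial_1 F_{23}$ appear nowhere. If $F_{ij}$ is built as a one-dimensional antiderivative of a component of $f$ in the direction $x_j$ (or $x_i$), then $\partial_k F_{ij}$ for the remaining $k$ is an antiderivative of a \emph{derivative} of $f$, which is not bounded in $L^1$ by $\|f\|_{L^1}$; and you cannot gauge these away (e.g.\ forcing $\partial_3 F_{12}=0$ makes $F_{12}$ independent of $x_3$, hence not in $L^{3/2}(\mathbb{R}^3)$ unless zero). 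So for $n\geq 3$ the ``delicate technical step'' you flag is in fact a genuine obstruction, not a bookkeeping matter; only the essentially trivial case $n=2$, where $F$ has a single component and both of its partials equal components of $f$, goes through as you describe.

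It is also worth noting that both the paper's proof and Van Schaftingen's original one sidestep this obstruction entirely by \emph{not} constructing a potential for $f$. Instead they decompose the \emph{test function} $\phi$ (on coordinate hyperplanes in the original, on unit spheres in this paper) into a small $L^\infty$ piece and a Lipschitz piece, and use the divergence-free condition only through an integration by parts against the Lipschitz piece (Lemmas~\ref{lem:parts} and~\ref{lem:decomp}). Averaging the resulting hypersurface estimate (Proposition~\ref{prop:Rn}) then yields \eqref{eq:mainest}. That is the structural difference: your proposal tries to ``solve'' for $f$ at the $L^1\to L^{n/(n-1)}$ endpoint where singular-integral bounds fail, whereas the known proofs instead exploit the borderline regularity of $\phi$.
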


See e.g.\ Bourgain and Brezis \cite{MR2293957}, Lanzani and Stein \cite{MR2122730}. We refer the interested reader to the survey in \cite{MR3298002}, for a more detailed account of this circle of ideas.

The original direct proof of Theorem \ref{thm:vs} in \cite{MR2078071} proceeds by decomposing
\[
 \int_{\mathbb{R}^n}  \langle f , \phi \rangle
 = \sum_{i= 1}^m \int_{\mathbb{R}} \Bigl(\int_{\mathbb{R}^{i - 1} \times \{s\} \times \mathbb{R}^{n - i}}
 f_i \phi_i\Bigr)\dif s,
\]
and by estimating first directly the innermost \((n-1)\)--dimensional integral. 
This gives the impression that the strategy is quite rigid. 
The first goal of this note is to prove Theorem~\ref{thm:vs} by averaging
a suitable estimate over all \emph{unit spheres} in $\mathbb{R}^n$. 

In a second part of this paper, we adapt this idea of averaging over families of sets 
to prove an analogue of Theorem~\ref{thm:vs}, in the setting where
$\mathbb{R}^n$ is replaced by the real hyperbolic space
$\mathbb{H}^n$:

\begin{thm} \label{thm:hyp}
Suppose $f$ is a smooth vector field on $\mathbb{H}^n$, with $$\operatorname{div}_g \, f = 0$$
where $\operatorname{div}_g$ is the divergence with respect to the metric $g$ on $\mathbb{H}^n$. Then for any compactly supported smooth vector field $\phi$ on $\mathbb{H}^n$, we have
\begin{equation} \label{eq:mainestHn}
\left| \int_{\mathbb{H}^n}
\langle f,\phi \rangle_g \dif V_g \right| \leq C \|f\|_{L^1(\mathbb{H}^n)} \|\nabla_g \phi\|_{L^n(\mathbb{H}^n)}.
\end{equation}
where $\langle  \cdot, \cdot \rangle_g$ and $\dif V_g$ are the pointwise inner product and the volume measure with respect to $g$ respectively, $\nabla_g \phi$ is the $(1,1)$ tensor given by the Levi-Civita connection of $\phi$ with respect to $g$, and
$$\|f\|_{L^1(\mathbb{H}^n)} = \int_{\mathbb{H}^n} |f|_g \dif V_g, \quad \|\nabla_g \phi\|_{L^n(\mathbb{H}^n)} = \left( \int_{\mathbb{H}^n} |\nabla_g \phi|_g^n \dif V_g \right)^{1/n}.$$
\end{thm}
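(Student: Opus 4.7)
The plan is to transport the averaging-over-unit-spheres proof of Theorem~\ref{thm:vs} to $\Hn$, replacing the translation-invariant family of Euclidean unit balls by the isometry-invariant family of unit geodesic balls in $\Hn$. Two features of $\Hn$ make this adaptation feasible: its isometry group acts transitively, so all unit geodesic balls are pairwise congruent, and its bounded geometry yields uniform local Sobolev--Poincar\'e inequalities at unit scale.

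I would begin with the Fubini identity
\begin{equation*}
V_1 \int_{\Hn} \langle f, \phi \rangle_g \dif V_g = \int_{\Hn} \left( \int_{B_g(x_0, 1)} \langle f, \phi \rangle_g \dif V_g \right) \dif V_g(x_0),
\end{equation*}
where $V_1$ is the common volume of a unit geodesic ball. For fixed $x_0$, I would split $\phi = P_{x_0}\phi(x_0) + (\phi - P_{x_0}\phi(x_0))$ on $B_g(x_0, 1)$, with $P_{x_0}$ denoting parallel transport of the vector $\phi(x_0) \in T_{x_0}\Hn$ along radial geodesics from $x_0$. The first summand, paired against $f$, reduces via Stokes' theorem and $\operatorname{div}_g f = 0$ to a boundary integral on $\partial B_g(x_0, 1)$ up to a curvature correction, while the second summand is handled by H\"older's inequality combined with a uniform borderline Sobolev--Poincar\'e-type estimate on the unit ball. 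Summing the per-ball bounds through Fubini---and controlling the resulting convolution against the indicator function of a unit ball via Minkowski's inequality---yields the global bound \eqref{eq:mainestHn}.

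The main obstacle is the nonflatness of $\Hn$: parallel transport is nontrivial, so $P_{x_0}\phi(x_0)$ is not covariantly constant on $B_g(x_0, 1)$, and the Stokes computation produces extra curvature-type terms absent in the Euclidean case. Since $\Hn$ has constant sectional curvature and we work at a fixed unit scale, these terms are bounded uniformly in $x_0$, and the local Sobolev--Poincar\'e constant on unit balls is likewise uniform by isometry invariance. The technical heart of the proof will be to track these curvature contributions carefully, and to verify that the various boundary integrals---once reassembled via the $x_0$-integration---do not exceed $C\|f\|_{L^1(\Hn)}\|\nabla_g\phi\|_{L^n(\Hn)}$.
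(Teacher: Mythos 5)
Your plan shares the paper's philosophy of patching together per-region estimates via isometry invariance, but the device you propose---splitting $\phi = P_{x_0}\phi(x_0) + (\phi - P_{x_0}\phi(x_0))$ on a unit geodesic \emph{ball}---cannot deliver the estimate, because both halves of the split break down at the borderline. For the first half, Stokes together with $\operatorname{div}_g f = 0$ converts $\int_{B_g(x_0,1)}\langle f, P_{x_0}\phi(x_0)\rangle_g\dif V_g$ into a boundary integral $\int_{\partial B_g(x_0,1)} u\,\langle f,\nu\rangle_g$ with a potential $u$ of size $\lesssim |\phi(x_0)|$; after the Fubini integration in $x_0$, the only H\"older pairing that produces $\|f\|_{L^1}$ forces $\|\phi\|_{L^\infty}$ on the other factor, which is exactly the Sobolev embedding that fails at $p=n$. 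For the second half, H\"older yields $\|f\|_{L^1(B)}\|\phi - P_{x_0}\phi(x_0)\|_{L^\infty(B)}$, and since $B$ is $n$-dimensional, $W^{1,n}(B)\not\hookrightarrow L^\infty(B)$: the ``uniform borderline Sobolev--Poincar\'e estimate on the unit ball'' you invoke is precisely what does not exist. Trading exponents (e.g.\ $\|f\|_{L^{n'}(B)}\|\phi - P_{x_0}\phi(x_0)\|_{L^n(B)}$) demands more integrability of $f$ than $L^1$. So the curvature bookkeeping you flag as the main obstacle is not the real one; the split itself is too coarse.

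What rescues the paper's argument is a shift in dimension. One averages not over solid unit balls but over totally geodesic \emph{hypersurfaces} (unit spheres in $\Rn$, hyperbolic hyperplanes in $\Hn$), where $W^{1,n}$ embeds \emph{supercritically} into $C^{0,1/n}$ because the hypersurface has dimension $n-1$. This genuine modulus of continuity is the engine behind Proposition~\ref{prop:Hn}. Even then, the constant-plus-remainder split is replaced by the $\lambda$-dependent decomposition of Lemma~\ref{lem:decompHn} (built on Lemma~\ref{lem:decompHmsimple}): one piece is small in $L^\infty$ on the hypersurface and pairs against $\|f\|_{L^1(S)}$, the other extends to the bulk with small Lipschitz constant and pairs against $\|f\|_{L^1(X)}$; optimizing in $\lambda$ produces the geometric mean $\|f\|_{L^1(X)}^{1/n}\|f\|_{L^1(S)}^{1-1/n}$ in \eqref{eq:HnS}, whose exponents are exactly what the final H\"older step in the averaging variable requires. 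Without such a two-parameter mechanism, the Fubini identity over unit balls cannot close. (If instead you are willing to invoke a local version of the Euclidean estimate, Proposition~\ref{prop:vs}, as a black box, you recover the appendix's indirect proof via partition of unity and the Hardy inequality of Lemma~\ref{lem:SobHn}---a valid route, but a different one from what you sketched.)
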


We note that the above theorem is formulated entirely geometrically on $\mathbb{H}^n$, without the need of specifying a choice of coordinate chart. 
As explained in Appendix~\ref{appendix}, Theorem~\ref{thm:hyp} can be proved indirectly by patching together known estimates on $\mathbb{R}^n$ via a partition of unity, and by applying Hardy's inequality to get rid of lower order terms.

We shall prove Theorem~\ref{thm:hyp} by averaging a suitable estimate over a family of hypersurfaces in $\mathbb{H}^n$, where the family of hypersurfaces is obtained from the orbit of a ``vertical hyperplane'' under all isometries in $\mathbb{H}^n$. The latter shares a similar flavour to the proof we will give below of Theorem~\ref{thm:vs}. The main innovation there is in deducing Theorem~\ref{thm:hyp} from Proposition~\ref{prop:Hn}, and in establishing Lemma~\ref{lem:decompHmsimple} (see Section~\ref{sect:Hn} for details).
\\

\textbf{Acknowledgments.} S. Chanillo was partially supported by NSF grant
DMS 1201474. J. Van Schaftingen was partially supported by the Fonds de la
Recherche Scientifique-FNRS. P.-L. Yung was  partially supported by
a Titchmarsh Fellowship at the University of Oxford, a junior
research fellowship at St. Hilda's College, a direct grant for research from the Chinese University of Hong Kong (3132713), and 
an Early Career Grant CUHK24300915 from the Hong Kong Research Grant Council.

\section{Another proof of Theorem~\ref{thm:vs}}

Theorem~\ref{thm:vs} will follow from the following Proposition:

\begin{prop} \label{prop:Rn}
Let $f$, $\phi$ be as in Theorem~\ref{thm:vs}. Write $\mathbb{B}^n$ for the unit ball $\{x \in \mathbb{R}^n \colon |x| < 1\}$ in $\mathbb{R}^n$, and $\mathbb{S}^{n-1}$ for the unit sphere (i.e.\ the boundary of $\mathbb{B}^n$). Also write $\dif \sigma$ for the standard surface measure on $\mathbb{S}^{n-1}$, and $\nu$ for the outward unit normal to the sphere $\mathbb{S}^{n-1}$. Then
\begin{equation} \label{eq:sphere}
\left| \int_{\mathbb{S}^{n-1}} \langle f, \nu \rangle \langle \phi, \nu \rangle \dif \sigma \right|
\leq C \|f\|_{L^1(\mathbb{R}^n \setminus \mathbb{B}^n)}^{\frac{1}{n}} \|f\|_{L^1(\mathbb{S}^{n-1})}^{1-\frac{1}{n}} \|\phi \|_{W^{1,n}(\mathbb{S}^{n-1})}.
\end{equation}
Here $\|\phi\|_{W^{1,n}(\mathbb{S}^{n-1})} = \|\phi\|_{L^n(\S)} + \|\nabla_{\mathbb{S}^{n-1}} \phi\|_{L^n(\S)},$ where $\nabla_{\mathbb{S}^{n-1}} \phi$ is the (1,1) tensor on $\mathbb{S}^{n-1}$ given by the covariant derivative of the vector field $\phi$.
\end{prop}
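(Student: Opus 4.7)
The central idea is to combine the divergence-free condition on $f$ with an interpolation at a mollification scale $\epsilon \in (0,1]$ on the sphere. First set $u(\omega) := \langle \phi(\omega), \omega\rangle$, the normal component of $\phi$ on $\S$; since on the unit sphere the normal $\nu$ coincides with the position vector, the product rule yields $|\nabla_{\S} u| \le |\nabla_{\S} \phi| + |\phi|$ pointwise, so that $\|u\|_{W^{1,n}(\S)} \le C \|\phi\|_{W^{1,n}(\S)}$. Let $u_\epsilon$ be a smoothing of $u$ at scale $\epsilon$ on $\S$, constructed via a fixed smooth partition of unity together with local Euclidean mollifiers. The Morrey embedding $W^{1,n}(\S) \hookrightarrow C^{0,1/n}(\S)$ (which is valid because $n > n-1$) and Young's inequality for the mollifier yield
\begin{equation*}
\|u - u_\epsilon\|_{L^\infty(\S)} \le C \epsilon^{1/n} \|u\|_{W^{1,n}(\S)}, \qquad \|\nabla_{\S} u_\epsilon\|_{L^\infty(\S)} \le C \epsilon^{-(n-1)/n} \|u\|_{W^{1,n}(\S)}.
\end{equation*}

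Split $\int_{\S} \langle f, \nu\rangle \langle \phi, \nu\rangle \dif \sigma = I_1 + I_2$, with $I_1 = \int_{\S} \langle f, \nu\rangle (u - u_\epsilon) \dif \sigma$ and $I_2 = \int_{\S} \langle f, \nu\rangle u_\epsilon \dif \sigma$. H\"older and the first mollification estimate give $|I_1| \le C \epsilon^{1/n} \|f\|_{L^1(\S)} \|\phi\|_{W^{1,n}(\S)}$. To treat $I_2$, extend $u_\epsilon$ to a compactly supported function on $\Rn \setminus \mathbb{B}^n$ by setting $U_\epsilon(r\omega) := \eta(r)\, u_\epsilon(\omega)$ for $r \ge 1$ and $\omega \in \S$, where $\eta \in C^\infty([1,\infty))$ is a fixed cutoff with $\eta(1) = 1$ and $\eta(r) = 0$ for $r \ge 2$. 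Since $\operatorname{div} f = 0$, the divergence theorem applied in $\{|x| \ge 1\}$ gives $I_2 = -\int_{\{|x| \ge 1\}} \langle f, \nabla U_\epsilon\rangle \dif x$. A direct computation in spherical coordinates shows $|\nabla U_\epsilon| \le C(\|u_\epsilon\|_{L^\infty(\S)} + \|\nabla_{\S} u_\epsilon\|_{L^\infty(\S)}) \le C \epsilon^{-(n-1)/n} \|\phi\|_{W^{1,n}(\S)}$, and hence $|I_2| \le C \epsilon^{-(n-1)/n} \|f\|_{L^1(\Rn \setminus \mathbb{B}^n)} \|\phi\|_{W^{1,n}(\S)}$.

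Summing the two contributions leads to a bound of the form $C\|\phi\|_{W^{1,n}(\S)}\bigl(\epsilon^{1/n}\|f\|_{L^1(\S)} + \epsilon^{-(n-1)/n}\|f\|_{L^1(\Rn\setminus\mathbb{B}^n)}\bigr)$, which is minimized at $\epsilon = \|f\|_{L^1(\Rn \setminus \mathbb{B}^n)} / \|f\|_{L^1(\S)}$. When this optimal value is at most $1$, that choice produces exactly the inequality \eqref{eq:sphere}; in the opposite regime $\|f\|_{L^1(\Rn\setminus \mathbb{B}^n)} \ge \|f\|_{L^1(\S)}$, the trivial estimate $|I_1+I_2| \le \|u\|_{L^\infty(\S)} \|f\|_{L^1(\S)} \le C \|\phi\|_{W^{1,n}(\S)} \|f\|_{L^1(\S)}$ (via Morrey on $\S$) already dominates the desired right-hand side. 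The conceptual point is that Morrey's Hölder exponent $1/n$ on the $(n-1)$-dimensional sphere matches precisely the interpolation exponents appearing in \eqref{eq:sphere}; accordingly, the main technical step is to verify the two mollification estimates on $\S$ with the indicated scaling, after which the remainder of the argument is a routine balance-of-scales computation.
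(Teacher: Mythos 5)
Your proposal is correct and follows essentially the same strategy as the paper: decompose the normal trace $\langle\phi,\nu\rangle$ into a low-$L^\infty$ piece $u-u_\epsilon$ and a Lipschitz-controlled piece $u_\epsilon$ by mollifying at scale $\epsilon$, pair the first with $f$ directly, integrate the second by parts over $\mathbb{R}^n\setminus\mathbb{B}^n$ using $\operatorname{div} f=0$, and optimize over $\epsilon$. The only cosmetic differences are that you use a compactly supported radial cut-off extension $U_\epsilon(r\omega)=\eta(r)u_\epsilon(\omega)$ (so the radial derivative $\eta'(r)u_\epsilon$ appears but is controlled by Morrey), whereas the paper extends $\varphi_2$ homogeneously of degree $0$ and handles compact support via a secondary cut-off $\theta_\varepsilon\to1$, and you treat the regime $\|f\|_{L^1(\mathbb{R}^n\setminus\mathbb{B}^n)}\ge\|f\|_{L^1(\mathbb{S}^{n-1})}$ by the trivial bound while the paper absorbs it into its decomposition lemma by taking $\varphi_2$ to be the spherical average.
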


The proof of Proposition~\ref{prop:Rn} in turn depends on the following two lemmas. The first one is a simple lemma about integration by parts:

\begin{lem} \label{lem:parts}
Let $f, \nu$ be as in Proposition~\ref{prop:Rn}. Then for any compactly supported smooth function $\psi$ on $\mathbb{R}^n$, we have
$$
\int_{\mathbb{S}^{n-1}} \langle f, \nu \rangle \psi \, \dif \sigma = - \int_{\mathbb{R}^n \setminus \mathbb{B}^n} \langle f, \nabla \psi \rangle\dif x.
$$
\end{lem}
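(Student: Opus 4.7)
The plan is to apply the divergence theorem on the exterior domain $\mathbb{R}^n \setminus \mathbb{B}^n$ to the vector field $\psi f$, exploiting the divergence-free condition $\operatorname{div} f = 0$. Since $\psi$ is compactly supported, $\psi f$ is compactly supported, so integrability and boundary contributions at infinity are not an issue.

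Concretely, I would first observe that
\[
 \operatorname{div}(\psi f) = \psi \operatorname{div} f + \langle f, \nabla \psi \rangle = \langle f, \nabla \psi \rangle
\]
pointwise on $\mathbb{R}^n$. Next, I pick $R>1$ large enough that $\operatorname{supp}(\psi)$ is contained in the open ball of radius $R$, so that $\psi f$ vanishes identically on the sphere of radius $R$. Applying the divergence theorem on the bounded annular region $\{1<|x|<R\}$, whose boundary consists of $\mathbb{S}^{n-1}$ (with outward unit normal $-\nu$ from the viewpoint of the annulus) and the sphere of radius $R$ (on which $\psi f$ vanishes), yields
\[
 \int_{\mathbb{R}^n \setminus \mathbb{B}^n} \langle f, \nabla \psi \rangle \dif x
 = \int_{\{1<|x|<R\}} \operatorname{div}(\psi f)\dif x
 = -\int_{\mathbb{S}^{n-1}} \langle f, \nu \rangle \psi \dif \sigma,
\]
which is exactly the claimed identity after rearrangement.

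There is essentially no obstacle here; the only mild care needed is justifying the use of the divergence theorem on an unbounded exterior domain, which is handled by the compact support of $\psi$ as above. The lemma is really just a packaging of this one-line integration-by-parts identity, suited for later use in proving Proposition~\ref{prop:Rn}.
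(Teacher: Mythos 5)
Your proof is correct and follows essentially the same route as the paper: apply the divergence theorem to $\psi f$ on the exterior domain, using $\operatorname{div} f = 0$ to reduce $\operatorname{div}(\psi f)$ to $\langle f, \nabla\psi\rangle$. Truncating to the annulus $\{1<|x|<R\}$ is a minor extra precaution the paper skips (it applies the divergence theorem directly on $\mathbb{R}^n\setminus\mathbb{B}^n$, noting $\nu$ is the inward normal there), but the substance is identical.
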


The second one is a decomposition lemma for functions on the sphere $\mathbb{S}^{n-1}$:

\begin{lem} \label{lem:decomp}
Let $\varphi$ be a smooth function on $\mathbb{S}^{n-1}$. For any $\lambda > 0$, there exists a decomposition $$\varphi = \varphi_1 + \varphi_2 \quad \text{on $\mathbb{S}^{n-1}$},$$ and an extension $\tilde{\varphi}_2$ of $\varphi_2$ to $\mathbb{R}^n \setminus \mathbb{B}^n$, such that $\tilde{\varphi}_2$ is smooth and bounded on $\mathbb{R}^n \setminus \mathbb{B}^n$, with
$$
\|\varphi_1\|_{L^{\infty}(\mathbb{S}^{n-1})} \leq C \lambda^{\frac{1}{n}} \|\nabla_{\mathbb{S}^{n-1}} \varphi\|_{L^n (\mathbb{S}^{n-1})},
$$
$$
\|\nabla \tilde{\varphi}_2\|_{L^\infty (\mathbb{R}^n\setminus \mathbb{B}^n)}  \leq C \lambda^{\frac{1}{n}-1} \|\nabla_{\mathbb{S}^{n-1}} \varphi\|_{L^n (\mathbb{S}^{n-1})}.
$$
\end{lem}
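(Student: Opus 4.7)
The plan is to take $\varphi_2 := P_{\lambda^2} \varphi$ where $P_t = e^{t\Delta_{\mathbb{S}^{n-1}}}$ is the heat semigroup on $\mathbb{S}^{n-1}$, to set $\varphi_1 := \varphi - \varphi_2$, and to extend $\varphi_2$ radially by $\tilde{\varphi}_2(r\omega) := \varphi_2(\omega)$ for $r \geq 1$, $\omega \in \mathbb{S}^{n-1}$. (For $\lambda$ larger than the diameter of $\mathbb{S}^{n-1}$ one may instead just take $\varphi_2 := \bar\varphi$, the spherical average.) The heat kernel $p_t$ concentrates at scale $\sqrt{t}$, satisfies $\int p_t(x,y)\dif\sigma(y) = 1$, $\|p_t(x,\cdot)\|_{L^\infty} \lesssim t^{-(n-1)/2}$ for small $t$, and by interpolation with $\|p_t\|_{L^1}=1$, $\|p_t(x,\cdot)\|_{L^{n/(n-1)}} \lesssim t^{-(n-1)/(2n)}$.

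For the $\varphi_1$ bound, I would write $\varphi_1(x) = \int p_{\lambda^2}(x,y)(\varphi(x) - \varphi(y))\dif\sigma(y)$ and invoke Morrey's embedding on the $(n-1)$-dimensional compact manifold $\mathbb{S}^{n-1}$: since $n > n-1$, one has $W^{1,n}(\mathbb{S}^{n-1}) \hookrightarrow C^{0,1/n}(\mathbb{S}^{n-1})$ with
\[
|\varphi(x) - \varphi(y)| \leq C\, d(x,y)^{1/n}\, \|\nabla_{\mathbb{S}^{n-1}}\varphi\|_{L^n(\mathbb{S}^{n-1})}.
\]
A standard moment estimate $\int p_{\lambda^2}(x,y)\,d(x,y)^{1/n}\dif\sigma(y) \lesssim \lambda^{1/n}$ then gives $\|\varphi_1\|_{L^\infty} \leq C\lambda^{1/n}\|\nabla_{\mathbb{S}^{n-1}}\varphi\|_{L^n}$. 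For the bound on $\nabla_{\mathbb{S}^{n-1}}\varphi_2$, I would use the Bakry--\'Emery gradient estimate $|\nabla P_t f| \leq e^{-(n-2)t} P_t |\nabla f|$ on $\mathbb{S}^{n-1}$ (valid since $\mathrm{Ric}_{\mathbb{S}^{n-1}} = (n-2)g \geq 0$), combined with Hölder's inequality and the interpolated $L^{n/(n-1)}$ bound on $p_{\lambda^2}$, to obtain $\|\nabla_{\mathbb{S}^{n-1}}\varphi_2\|_{L^\infty} \leq C\lambda^{-(n-1)/n}\|\nabla_{\mathbb{S}^{n-1}}\varphi\|_{L^n}$, noting $-(n-1)/n = 1/n - 1$.

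For the radial extension $\tilde{\varphi}_2$, smoothness and boundedness are automatic (since $\varphi_2$ is smooth on the compact sphere), and in spherical coordinates its Euclidean gradient has vanishing radial component and tangential component $r^{-1}\nabla_{\mathbb{S}^{n-1}}\varphi_2(\omega)$; since $r\geq 1$, this is bounded in modulus by $|\nabla_{\mathbb{S}^{n-1}}\varphi_2(\omega)|$, transferring the $L^\infty$ bound to $\mathbb{R}^n \setminus \mathbb{B}^n$ without loss. The main obstacle is ensuring that the intrinsic mollification of $\varphi$ commutes cleanly with $\nabla_{\mathbb{S}^{n-1}}$, which for the heat semigroup is exactly the Bakry--\'Emery gradient estimate; a hands-on substitute would be to replace $P_{\lambda^2}$ by an $SO(n)$-invariant nonnegative convolution kernel $K_\lambda(x,y)$ on $\mathbb{S}^{n-1}\times\mathbb{S}^{n-1}$ of width $\lambda$ and use that $SO(n)$ acts by isometries to derive the same pointwise inequality $|\nabla_{\mathbb{S}^{n-1}}\varphi_2| \leq K_\lambda * |\nabla_{\mathbb{S}^{n-1}}\varphi|$. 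A naïve partition-of-unity construction with Euclidean mollifiers in coordinate charts, by contrast, would generate commutator terms involving $\varphi$ itself, which the desired estimate does not allow one to control, so I would avoid that route.
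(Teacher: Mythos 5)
Your proposal is correct, and it takes a genuinely different route from the paper on the key step. Both arguments handle $\varphi_1$ the same way (Morrey's embedding $|\varphi(x)-\varphi(y)|\lesssim d(x,y)^{1/n}\|\nabla_{\mathbb{S}^{n-1}}\varphi\|_{L^n}$ together with a first-moment bound on the averaging kernel), both treat large $\lambda$ by reducing to the spherical mean, and both use the radial, degree-zero-homogeneous extension, where the radial derivative vanishes and the tangential derivative is damped by $r^{-1}\leq 1$. The divergence is in the bound on $\nabla_{\mathbb{S}^{n-1}}\varphi_2$. The paper never commutes a derivative past the smoothing operator: it mollifies $\varphi$ against the Euclidean bump $\eta_\lambda(x-y)$ restricted to the sphere, normalizes by the rotation-invariant constant $c_\lambda$, and then differentiates the kernel directly in $x$; since $c_\lambda$ is independent of $x\in\mathbb{S}^{n-1}$, differentiating the normalization produces a mean-zero kernel against which one can subtract $\varphi(x)$ and apply Morrey a \emph{second} time, picking up exactly one extra $\lambda^{-1}$ from the derivative of $\eta_\lambda$. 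Your route instead pushes the derivative onto $\varphi$ via a genuine gradient-commutation inequality -- Bakry--\'Emery for the heat semigroup, or, in the hands-on version you sketch, writing a zonal smoothing as an average over $SO(n)$ and differentiating through isometries, which gives $|\nabla_{\mathbb{S}^{n-1}}\varphi_2|\leq K_\lambda*|\nabla_{\mathbb{S}^{n-1}}\varphi|$ -- and then closes with H\"older against an interpolated $L^{n/(n-1)}$ kernel bound rather than with Morrey. Both work; the paper's version is entirely elementary and self-contained (explicit kernel, a single cancellation, two uses of Morrey), whereas yours leverages classical heat-kernel and curvature technology, which is arguably slicker but heavier. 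One small caution: the pointwise inequality $|\nabla(K_\lambda*\varphi)|\leq K_\lambda*|\nabla\varphi|$ for a general zonal kernel is not free -- it is exactly the $SO(n)$-averaging representation (or Bakry--\'Emery, with the favorable sign $\mathrm{Ric}\geq 0$ on $\mathbb{S}^{n-1}$) that makes it true, so that step should be spelled out as you indicate rather than taken for granted.
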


Here $|\nabla_{\mathbb{S}^{n-1}} \varphi|$ is the norm of the gradient of the function $\varphi$ on $\mathbb{S}^{n-1}$.
We postpone the proofs of Lemmas~\ref{lem:parts} and \ref{lem:decomp} to the end of this section.

Now we are ready for the proof of Proposition~\ref{prop:Rn}.

\begin{proof}[Proof of Proposition~\ref{prop:Rn}]
Let $f, \phi$ be as in the statement of Theorem~\ref{thm:vs}. 
Apply Lemma~\ref{lem:decomp} to $\varphi = \langle \phi, \nu \rangle$, where $\lambda > 0$ is to be chosen. Then since
$$
\|\nabla_{\mathbb{S}^{n-1}} \langle \phi, \nu \rangle\|_{L^n(\mathbb{S}^{n-1})} \leq C \|\phi\|_{W^{1,n}(\mathbb{S}^{n-1})},
$$
there exists a decomposition
$$
\langle \phi, \nu \rangle = \varphi_1 + \varphi_2 \quad \text{on $\S$},
$$
and an extension $\tilde{\varphi}_2$ of $\varphi_2$ to $\Rn  \setminus \mathbb{B}^n$, such that $\tilde{\varphi}_2 \in C^{\infty} \cap L^{\infty}(\mathbb{R}^n \setminus \mathbb{B}^n)$,
$$
\|\varphi_1\|_{L^{\infty}(\mathbb{S}^{n-1})} \leq C \lambda^{\frac{1}{n}} \|\phi\|_{W^{1,n}(\mathbb{S}^{n-1})},
$$
and
$$
\|\nabla \tilde{\varphi}_2\|_{L^{\infty}(\mathbb{R}^n \setminus \mathbb{B}^n)} \leq C \lambda^{\frac{1}{n}-1} \|\phi\|_{W^{1,n}(\mathbb{S}^{n-1})}.
$$
Now 
\begin{align*}
\int_{\mathbb{S}^{n-1}} \langle f, \nu \rangle \langle \phi, \nu \rangle \dif \sigma
&=
\int_{\mathbb{S}^{n-1}} \langle f, \nu \rangle \varphi_1 \dif \sigma + \int_{\mathbb{S}^{n-1}} \langle f, \nu \rangle \varphi_2 \dif \sigma \\
&= I + I\!I.
\end{align*}
In the first term, we estimate trivially
$$
|I| \leq \|f\|_{L^1(\S)} \|\varphi_1\|_{L^{\infty}(\S)} \leq C \lambda^{\frac{1}{n}} \|f\|_{L^1(\S)} \|\phi\|_{W^{1,n}(\S)}.
$$
To estimate the second term, we let $\theta$ be a smooth cut-off function with compact support on $\mathbb{R}^n$ such that $\theta(x) = 1$ whenever $|x| \leq 1$.
For $\varepsilon \in (0, 1)$, let $\theta_{\varepsilon}(x) = \theta(\varepsilon x)$. 
Then $\theta_{\varepsilon} = 1$ on $\mathbb{S}^{n-1}$, so we can rewrite $I\!I$ as
$$
I\!I = \int_{\mathbb{S}^{n-1}} \langle f, \nu \rangle \varphi_2 \theta_{\varepsilon} \dif \sigma
$$
for any $\varepsilon \in (0,1)$. We then integrate by parts using Lemma~\ref{lem:parts}, with $\psi := \tilde{\varphi}_2 \theta_{\varepsilon}$, and obtain
$$
I\!I = -\int_{\mathbb{R}^n \setminus \mathbb{B}^n} \langle f, \nabla \tilde{\varphi}_2 \rangle \theta_{\varepsilon} \dif x - \int_{\mathbb{R}^n \setminus \mathbb{B}^n} \langle f, \nabla  \theta_{\varepsilon} \rangle  \tilde{\varphi}_2 \dif x.
$$
(The cut-off function $\theta_{\varepsilon}$ is inserted so that $\psi$ has compact support.) We now let $\varepsilon \to 0^+$. The second term then tends to $0$, since it is just
$$
-\varepsilon \int_{\mathbb{R}^n \setminus \mathbb{B}^n} \langle f(x), (\nabla  \theta)(\varepsilon x) \rangle  \tilde{\varphi}_2(x) \dif x,
$$
where $f \in L^1$,  $\nabla \theta(\varepsilon \cdot) \in L^{\infty}$ and  $\tilde{\varphi}_2 \in L^{\infty}$ on $\mathbb{R}^n \setminus \mathbb{B}^n$. On the other hand, the first term tends to 
$$
-\int_{\mathbb{R}^n \setminus \mathbb{B}^n} \langle f, \nabla \tilde{\varphi}_2 \rangle \dif x
$$
by dominated convergence theorem, since $f \in L^1$ and  $\nabla \tilde{\varphi}_2 \in L^{\infty}$ on $\mathbb{R}^n \setminus \mathbb{B}^n$. 
As a result,
$$
I\!I = -\int_{\mathbb{R}^n \setminus \mathbb{B}^n} \langle f, \nabla \tilde{\varphi}_2 \rangle \dif x,
$$
from which we see that
$$
|I\!I| \leq \|f\|_{L^1(\mathbb{R}^n \setminus \mathbb{B}^n)} \|\nabla \tilde{\varphi}_2\|_{L^{\infty}(\mathbb{R}^n \setminus \mathbb{B}^n)} \leq C \lambda^{\frac{1}{n}-1} \|f\|_{L^1(\mathbb{R}^n \setminus \mathbb{B}^n)} \|\phi\|_{W^{1,n}(\S)}.
$$
Together, by choosing $\lambda = \frac{\|f\|_{L^1(\mathbb{R}^n \setminus \mathbb{B}^n)}}{\|f\|_{L^1(\S)}}$, we get
$$
\left| \int_{\mathbb{S}^{n-1}} \langle f, \nu \rangle \langle \phi, \nu \rangle \dif \sigma \right| \leq C \|f\|_{L^1(\mathbb{R}^n \setminus \mathbb{B}^n)}^{\frac{1}{n}} \|f\|_{L^1(\S)}^{1-\frac{1}{n}} \|\phi\|_{W^{1,n}(\S)}
$$
as desired.
\end{proof}

We will now deduce Theorem~\ref{thm:vs} from Proposition~\ref{prop:Rn}. The idea is to average \eqref{eq:sphere} over all unit spheres in $\mathbb{R}^n$.

\begin{proof}[Proof of Theorem~\ref{thm:vs}]
First, for each fixed $x \in \mathbb{R}^n$, we have
\begin{equation} \label{eq:sphavg}
\langle f(x), \phi(x) \rangle = c \int_{\S} \langle f(x), \omega \rangle \langle \phi(x), \omega \rangle \dif \sigma(\omega)
\end{equation}
where we are identifying $\omega \in \S$ with the corresponding unit tangent vector to $\mathbb{R}^n$ based at the point $x$. Hence to estimate $\int_{\Rn} \langle f(x), \phi(x) \rangle\dif x$, it suffices to estimate
$$
\int_{\Rn} \int_{\S} \langle f(x), \omega \rangle \langle \phi(x), \omega \rangle \dif \sigma(\omega)\dif x,
$$
which is the same as
$$
\int_{\Rn} \int_{\S} \langle f(z+\omega), \omega \rangle \langle \phi(z+\omega), \omega \rangle \dif \sigma(\omega)\dif z
$$
by a change of variable $(x,\omega) \mapsto (z+\omega,\omega)$. Now when $z = 0$, the inner integral can be estimated by Proposition~\ref{prop:Rn}; for a general $z \ne 0$, one can still estimate the inner integral by Proposition~\ref{prop:Rn}, since the proposition is invariant under translations. Thus the above double integral is bounded, in absolute value, by
$$
C \|f\|_{L^1(\Rn)}^{\frac{1}{n}} \int_{\Rn} \|f(z+\cdot)\|_{L^1(\S)}^{1-\frac{1}{n}} \|\phi(z+\cdot)\|_{W^{1,n}(\S)}\dif z.
$$
Applying H\"older's inequality to the last integral in $z$, one bounds this by
$$
C \|f\|_{L^1(\Rn)}^{\frac{1}{n}} \left( \int_{\Rn} \|f(z+\cdot)\|_{L^1(\S)}\dif z \right)^{1-\frac{1}{n}} \left( \int_{\Rn} \|\phi(z+\cdot)\|_{W^{1,n}(\S)}^n\dif z \right)^{\frac{1}{n}}.
$$
Since
$$
\int_{\Rn} \|f(z+\cdot)\|_{L^1(\S)}\dif z = c \|f\|_{L^1(\Rn)}
$$
and
$$
\int_{\Rn} \|\phi(z+\cdot)\|_{W^{1,n}(\S)}^n\dif z
\leq c \left( \|\phi\|_{L^n(\Rn)}^n + \|\nabla \phi\|_{L^n(\Rn)}^n \right),
$$
we proved that under the assumption of Theorem~\ref{thm:vs}, we have
\begin{equation} \label{eq:lowerorder}
\left| \int_{\Rn} \langle f, \phi \rangle \right| \leq C \|f\|_{L^1(\Rn)} \left( \|\nabla \phi\|_{L^n(\Rn)} + \|\phi\|_{L^n(\Rn)} \right).
\end{equation}
This is almost the desired conclusion, except that we have an additional zeroth order term on $\phi$ on the right hand side of the estimate. But that can be scaled away by homogeneity. In fact, if $f$ and $\phi$ satisfies the assumption of Theorem~\ref{thm:vs}, then so does the dilations
$$
f_{\varepsilon}(x) := \varepsilon^{-n} f(\varepsilon^{-1} x), \quad \phi_{\varepsilon}(x) := \phi(\varepsilon^{-1} x), \quad \varepsilon > 0.
$$
Applying \eqref{eq:lowerorder} to $f_{\varepsilon}$ and $\phi_{\varepsilon}$ instead, we get
$$
\left| \int_{\Rn} \langle f_{\varepsilon}, \phi_{\varepsilon} \rangle \right| \leq C \|f_{\varepsilon}\|_{L^1(\Rn)} \left( \|\nabla \phi_{\varepsilon}\|_{L^n(\Rn)} + \|\phi_{\varepsilon}\|_{L^n(\Rn)} \right),
$$
i.e.
$$
\left| \int_{\Rn} \langle f, \phi \rangle \right| \leq C \|f\|_{L^1(\Rn)} \left( \|\nabla \phi\|_{L^n(\Rn)} + \varepsilon \|\phi\|_{L^n(\Rn)} \right),
$$
so letting $\varepsilon \to 0^+$, we get the desired conclusion of Theorem~\ref{thm:vs}.
\end{proof}

\begin{proof}[Proof of Lemma~\ref{lem:parts}]
Note that $\langle f, \nu \rangle \psi = \langle \psi f, \nu \rangle$,  and $\nu$ is the inward unit normal to $\partial (\mathbb{R}^n \setminus \mathbb{B}^n)$. So by the divergence theorem on $\mathbb{R}^n$, we have
$$
\int_{\mathbb{S}^{n-1}} \langle f, \nu \rangle \psi \, \dif \sigma = -\int_{\mathbb{R}^n \setminus \mathbb{B}^n} \operatorname{div} (\psi f)\dif x.
$$
But  since $\operatorname{div} f = 0$, we have $$\operatorname{div} (\psi f) = \langle f, \nabla \psi \rangle + \psi\, \operatorname{div} f = \langle f, \nabla \psi \rangle,$$ and the desired equality follows.
\end{proof}

\begin{proof}[Proof of Lemma~\ref{lem:decomp}]
Suppose $\varphi$ and $\lambda$ are as in Lemma~\ref{lem:decomp}. We will construct first a decomposition $\varphi = \varphi_1 + \varphi_2$ on $\mathbb{S}^{n-1}$, so that both $\varphi_1$ and $\varphi_2$ are smooth on $\mathbb{S}^{n-1}$, and 
$$
\|\varphi_1\|_{L^{\infty}(\mathbb{S}^{n-1})} \leq C \lambda^{\frac{1}{n}} \|\nabla_{\mathbb{S}^{n-1}} \varphi\|_{L^n (\mathbb{S}^{n-1})},
$$
$$
\|\nabla_{\mathbb{S}^{n-1}} \varphi_2\|_{L^\infty (\mathbb{S}^{n-1})}  \leq C \lambda^{\frac{1}{n}-1} \|\nabla_{\mathbb{S}^{n-1}} \varphi\|_{L^n (\mathbb{S}^{n-1})}.
$$
(Here $\nabla_{\mathbb{S}^{n-1}} \varphi_2$ is the gradient on $\mathbb{S}^{n-1}$.) Once this is established, the lemma will follow, by extending $\varphi_2$ so that it is homogeneous of degree 0; in other words, we will then define
$$
\tilde{\varphi}_2(x) := \varphi_2 \left( \frac{x}{|x|} \right), \quad |x| \geq 1.
$$
It is then straight forward to verify that
$$
\|\nabla \tilde{\varphi}_2\|_{L^\infty (\mathbb{R}^n\setminus \mathbb{B}^n)}  \leq C \lambda^{\frac{1}{n}-1} \|\nabla_{\mathbb{S}^{n-1}} \varphi\|_{L^n (\mathbb{S}^{n-1})},
$$
since the radial derivative of $\tilde{\varphi}_2$ is zero.

To construct the desired decomposition on $\mathbb{S}^{n-1}$, we proceed as follows.

If \(\lambda \ge 1\), we set \(\varphi_2 = \fint_{\mathbb{S}^n} \varphi\), so that $\nabla_{\mathbb{S}^{n-1}} \varphi_2 = 0$ on $\mathbb{S}^{n-1}$; then $$\varphi_1 = \varphi - \fint_{\mathbb{S}^n} \varphi,$$ and the estimate for $\varphi_1$ follows from the classical Morrey--Sobolev estimate.

If \(0 < \lambda < 1\), we pick a non-negative radial cut-off function $\eta \in C^{\infty}_c(\mathbb{R}^n)$, with $\eta = 1$ in a neighborhood of $0$, and define $\eta_{\lambda}(x) = \eta(\lambda^{-1} x)$ for $x \in \mathbb{R}^n$. We then consider the function 
$$
x \in \mathbb{R}^{n} \mapsto \int_{\mathbb{S}^{n-1}} \eta_{\lambda}(x-y) \dif \sigma(y).
$$
When restricted to $x \in \mathbb{S}^{n-1}$, this function is a constant independent of the choice of $x \in \mathbb{S}^{n-1}$, by rotation invariance of the integral. We then write $c_{\lambda}$ for this constant, i.e.
\begin{equation} \label{eq:clamdef}
c_{\lambda} := \int_{\mathbb{S}^{n-1}} \eta_{\lambda}(x-y) \dif \sigma(y), \quad |x| = 1.
\end{equation}
Note that by our choice of $\eta$, when $0 < \lambda < 1$, 
\begin{equation} \label{eq:clamest}
c_{\lambda} \simeq \lambda^{n-1}.
\end{equation}
Now we define, for $x \in \mathbb{S}^{n-1}$, that
$$
\varphi_2(x) = c_{\lambda}^{-1} \int_{\mathbb{S}^{n-1}} \eta_{\lambda}(x-y) \varphi(y) \dif \sigma(y), \quad \varphi_1(x) = \varphi(x) - \varphi_2(x).
$$
Then for $x \in \mathbb{S}^{n-1}$, we have
$$
\varphi_1(x) = c_{\lambda}^{-1} \int_{\mathbb{S}^{n-1}} \eta_{\lambda}(x-y) [\varphi(x) - \varphi(y)] \dif \sigma(y)
$$
by definition of $c_{\lambda}$. But for $x, y \in \mathbb{S}^{n-1}$, we have, by Morrey's embedding, that
$$
|\varphi(x)-\varphi(y)| \leq C |x-y|^{\frac{1}{n}} \|\nabla_{\mathbb{S}^{n-1}} \varphi\|_{L^n(\mathbb{S}^{n-1})}.
$$
It follows that
$$
|\varphi_1(x)| \leq  C  \|\nabla_{\mathbb{S}^{n-1}} \varphi\|_{L^n(\mathbb{S}^{n-1})}
c_{\lambda}^{-1} \int_{\mathbb{S}^{n-1}} \eta_{\lambda}(x-y) |x-y|^{\frac{1}{n}} \dif \sigma(y)
$$
Letting $\tilde{\eta}(x) =  |x|^{\frac{1}{n}}\eta(x)$, and $\tilde{\eta}_{\lambda}(x) = \tilde{\eta}(\lambda^{-1} x)$, we see that the right hand side above is just
$$
C \lambda^{\frac{1}{n}} \|\nabla_{\mathbb{S}^{n-1}} \varphi\|_{L^n(\mathbb{S}^{n-1})}
c_{\lambda}^{-1} \int_{\mathbb{S}^{n-1}} \tilde{\eta}_{\lambda}(x-y) \dif \sigma(y).
$$
But this last integral can be estimated by
$$
\int_{\mathbb{S}^{n-1}} \tilde{\eta}_{\lambda}(x-y) \dif \sigma(y) \lesssim \lambda^{n-1},
$$
by the support and $L^{\infty}$ bound of $\tilde{\eta}_{\lambda}$. Hence using also (\ref{eq:clamest}), we see that
$$
\|\varphi_1\|_{L^{\infty}(\mathbb{S}^{n-1})} \leq  C \lambda^{\frac{1}{n}} \|\nabla_{\mathbb{S}^{n-1}} \varphi\|_{L^n(\mathbb{S}^{n-1})}
$$
as desired.

Next, suppose $x \in \mathbb{S}^{n-1}$, and $v$ is a unit tangent vector to $\mathbb{S}^{n-1}$ at $x$. Then 
\begin{equation} \label{eq:partialvphi2y}
\partial_v \phi_2(x) = \lambda^{-1} c_{\lambda}^{-1}   \int_{\mathbb{S}^{n-1}} \langle v, \nabla \eta \rangle (\lambda^{-1}(x-y)) \varphi(y) \dif \sigma(y).
\end{equation}
But if we differentiate both sides of the definition (\ref{eq:clamdef}) of $c_{\lambda}$ with respect to $\partial_v$, we see that
\begin{equation} \label{eq:partialvphi2x}
0 = \int_{\mathbb{S}^{n-1}} \langle v, \nabla \eta \rangle (\lambda^{-1}(x-y)) \dif \sigma(y).
\end{equation}
Multiplying (\ref{eq:partialvphi2x}) by $\lambda^{-1} c_{\lambda}^{-1} \phi(x)$, and subtracting that from (\ref{eq:partialvphi2y}), we get
$$
\partial_v \phi_2(x) = \lambda^{-1}  c_{\lambda}^{-1}  \int_{\mathbb{S}^{n-1}} \langle v, \nabla \eta \rangle (\lambda^{-1}(x-y)) [\varphi(y)-\varphi(x)] \dif \sigma(y).
$$
Using Morrey's embedding again, we see that
$$
|\partial_v \phi_2(x)| \leq C \lambda^{-1} \|\nabla_{\mathbb{S}^{n-1}} \varphi\|_{L^n(\mathbb{S}^{n-1})} c_{\lambda}^{-1}  \int_{\mathbb{S}^{n-1}} |\langle v, \nabla \eta \rangle| (\lambda^{-1}(x-y)) |x-y|^{\frac{1}{n}} \dif \sigma(y).
$$
Letting $\bar{\eta}(x) =  |x|^{\frac{1}{n}}|\langle v, \nabla \eta \rangle|(x)$, and $\bar{\eta}_{\lambda}(x) = \bar{\eta}(\lambda^{-1} x)$, we see that the right hand side above is just
$$
C \lambda^{\frac{1}{n}} \lambda^{-1} \|\nabla_{\mathbb{S}^{n-1}} \varphi\|_{L^n(\mathbb{S}^{n-1})}
c_{\lambda}^{-1} \int_{\mathbb{S}^{n-1}} \bar{\eta}_{\lambda}(x-y) \dif \sigma(y).
$$
But this last integral can be estimated by
$$
\int_{\mathbb{S}^{n-1}} \bar{\eta}_{\lambda}(x-y) \dif \sigma(y) \dif \sigma(y) \lesssim \lambda^{n-1},
$$
by the support and $L^{\infty}$ bound of $\bar{\eta}_{\lambda}$. Hence using also (\ref{eq:clamest}), we see that
$$
\|\nabla_{\mathbb{S}^{n-1}} \varphi_2\|_{L^{\infty}(\mathbb{S}^{n-1})} \leq  C \lambda^{\frac{1}{n}-1} \|\nabla_{\mathbb{S}^{n-1}} \varphi\|_{L^n(\mathbb{S}^{n-1})}
$$
as desired.
\end{proof}

\section{A borderline Sobolev embedding on the real hyperbolic space $\mathbb{H}^n$} \label{sect:Hn}

We now turn to a corresponding result on the real hyperbolic space $\mathbb{H}^n$. We will first give a direct proof in this current section, in the spirit of the earlier proof of Theorem~\ref{thm:vs} using spherical averages. In the appendix we give a less direct proof, using a variant of Theorem~\ref{thm:vs} on $\mathbb{R}^n$.

First we need some notations. We will use the upper half space model for the hyperbolic space. In other words, we take $\mathbb{H}^n$ to be
$$
\mathbb{H}^n =  \mathbb{R}^n_+ = \{x = (x',x_n) \in \mathbb{R}^{n-1} \times \mathbb{R} \colon x_n > 0 \}
$$
and the metric on $\mathbb{H}^n$ to be
$$
g:=\frac{\vert \dif x \vert^2}{x_n^2}.
$$
We will use the following orthonormal frame of vector fields
$$
e_i := x_n \frac{\partial}{\partial x_i}, \quad i = 1, \dotsc, n,
$$
at every point of $\mathbb{H}^n$. Note that if $j \ne n$, then
\begin{equation} \label{eq:covar_nj}
\nabla_{e_n} e_j = 0.
\end{equation}
(Here $\nabla = \nabla_g$ is the Levi-Civita connection with respect to the hyperbolic metric $g$.) In fact, since $\{e_1,\dots,e_n\}$ is an orthonormal basis, for any $k=1,\dots,n$, we have
\begin{align*}
\langle \nabla_{e_n} e_j, e_k \rangle_g
&= \frac{1}{2} \left( \langle [e_n,e_j], e_k \rangle_g - \langle [e_n,e_k], e_j \rangle_g  - \langle [e_j,e_k], e_n \rangle_g \right) \\
&= \frac{1}{2} \left( \langle e_j, e_k \rangle_g - \langle (1-\delta_{kn}) e_k, e_j \rangle_g  - \langle -\delta_{kn} e_j, e_n \rangle_g \right) =0.
\end{align*}
Also, we have
\begin{equation} \label{eq:covar_nn}
\nabla_{e_n} e_n = 0.
\end{equation}
This is because if $j \ne n$, then
\begin{align*}
\langle \nabla_{e_n} e_n, e_j \rangle_g
&= - \langle e_n, \nabla_{e_n} e_j \rangle_g =0
\end{align*}
by \eqref{eq:covar_nj}, and
$$
\langle \nabla_{e_n} e_n, e_n \rangle_g
= \frac{1}{2} e_n \left( \langle e_n, e_n \rangle_g \right) = 0.
$$

To prove Theorem~\ref{thm:hyp}, note that we only need to consider the case $n \geq 2$, since when $n=1$, $$\|\phi\|_{L^{\infty}(\mathbb{H}^1)} \leq \int_0^{\infty} |\partial_y \phi(y)| dy = \|\nabla_g \phi\|_{L^1(\mathbb{H}^1)},$$ and \eqref{eq:mainestHn} follows trivially. Hence from now on we assume $n \geq 2$.

We will deduce Theorem~\ref{thm:hyp} from the following Proposition:

\begin{prop} \label{prop:Hn}
Assume $n \geq 2$. Let $f$, $\phi$ be as in Theorem~\ref{thm:hyp}. Write $S$ for the copy of $(n-1)$-dimensional hyperbolic space inside $\mathbb{H}^n$, given by
$$
S = \{x \in \mathbb{H}^n \colon x_1 = 0\},
$$
and $X$ for the half-space
$$
X = \{x \in \mathbb{H}^n \colon x_1 > 0\}
$$
so that $S$ is the boundary of $X$.
Also write $\dif V'_g$ for the volume measure on $S$ with respect to the hyperbolic metric on $S$, and $\nu = e_1$ for the unit normal to $S$. Then
\begin{equation} \label{eq:HnS}
\left| \int_{S}  \langle f, \nu \rangle_g \langle \phi, \nu \rangle_g \dif V'_g \right| \leq C \|f\|_{L^1(X)}^{\frac{1}{n}} \|f\|_{L^1(S)}^{1-\frac{1}{n}} \|\phi\|_{W^{1,n}(S)}.
\end{equation}
Here $\|\phi\|_{W^{1,n}(S)} = \|\phi\|_{L^n(S)} + \|\nabla_g \phi\|_{L^n(S)}$, and all integrals on $S$ on the right hand side are with respect to $\dif V'_g$.
\end{prop}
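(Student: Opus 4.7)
The plan is to follow the proof of Proposition~\ref{prop:Rn}, with the totally geodesic hypersurface $S$ and the half-space $X$ playing the roles of $\mathbb{S}^{n-1}$ and $\mathbb{R}^n\setminus\mathbb{B}^n$ respectively. The first ingredient is the hyperbolic analogue of Lemma~\ref{lem:parts}: for every compactly supported smooth $\psi$ on $\mathbb{H}^n$,
$$
\int_S \langle f,\nu\rangle_g \,\psi\,\dif V'_g = -\int_X \langle f,\nabla_g \psi\rangle_g \,\dif V_g,
$$
which follows from the divergence theorem on $X$ applied to $\psi f$, combined with $\operatorname{div}_g f = 0$ and the fact that $\nu=e_1$ is the inward unit normal to $X$ along its boundary $S$.

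The second ingredient (Lemma~\ref{lem:decompHmsimple}) is a decomposition for smooth $\varphi$ on $S$: given $\lambda>0$, write $\varphi=\varphi_1+\varphi_2$ on $S$ and produce a smooth bounded extension $\tilde\varphi_2$ of $\varphi_2$ to $X$ with
$$
\|\varphi_1\|_{L^\infty(S)} \le C\lambda^{1/n}\|\varphi\|_{W^{1,n}(S)}, \quad \|\nabla_g\tilde\varphi_2\|_{L^\infty(X)} \le C\lambda^{1/n-1}\|\varphi\|_{W^{1,n}(S)}.
$$
The key geometric observation is that horizontal translations in $x_1$ are hyperbolic isometries, so setting
$$
\tilde\varphi_2(x_1,x_2,\dots,x_n) := \varphi_2(0,x_2,\dots,x_n)
$$
gives $e_1\tilde\varphi_2=0$ and, for $j\ne 1$, $e_j\tilde\varphi_2(x)$ equal to the corresponding $S$-derivative of $\varphi_2$ at $(0,x_2,\dots,x_n)$; hence $\|\nabla_g\tilde\varphi_2\|_{L^\infty(X)} = \|\nabla_g\varphi_2\|_{L^\infty(S)}$. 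The task then reduces to decomposing $\varphi$ on $S\cong\mathbb{H}^{n-1}$, which can be done by convolving $\varphi$ at scale $\lambda$ against a bump function adapted to the hyperbolic geometry; the bound on $\varphi_1$ follows from Morrey's embedding on $\mathbb{H}^{n-1}$ (valid since $\dim S = n-1 < n$), and the bound on $\nabla_g \varphi_2$ from a cancellation argument mirroring \eqref{eq:partialvphi2x}--\eqref{eq:partialvphi2y}.

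With both ingredients in hand, the proof of \eqref{eq:HnS} mirrors the Euclidean one. Apply the decomposition to $\varphi=\langle\phi,\nu\rangle_g$, noting that $\|\varphi\|_{W^{1,n}(S)}\le C\|\phi\|_{W^{1,n}(S)}$ because $\nabla_{e_j}e_1=0$ along $S$ for each $j\ne 1$. Split the integral on the left-hand side of \eqref{eq:HnS} as $I+I\!I$, corresponding to $\varphi_1$ and $\varphi_2$; bound $I$ directly by $\|f\|_{L^1(S)}\|\varphi_1\|_{L^\infty(S)}$. For $I\!I$, insert a compactly supported cut-off $\theta_\varepsilon$, apply the integration-by-parts identity, and let $\varepsilon\to 0^+$ to obtain $I\!I=-\int_X\langle f,\nabla_g\tilde\varphi_2\rangle_g\,\dif V_g$, which is controlled by $\|f\|_{L^1(X)}\|\nabla_g\tilde\varphi_2\|_{L^\infty(X)}$. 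Choosing $\lambda=\|f\|_{L^1(X)}/\|f\|_{L^1(S)}$ balances the two contributions and yields \eqref{eq:HnS}. The main obstacle is Lemma~\ref{lem:decompHmsimple}: because $S\cong\mathbb{H}^{n-1}$ is noncompact, the ``large $\lambda$'' trick of subtracting a global average is unavailable, so a scale-adapted hyperbolic smoothing kernel with enough symmetry to deliver both $L^\infty$ bounds must be constructed directly.
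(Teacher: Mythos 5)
Your proposal mirrors the paper's proof of Proposition~\ref{prop:Hn} in structure: decompose $\langle\phi,\nu\rangle_g$ on $S$ at scale $\lambda$, extend the smooth piece $\varphi_2$, split the boundary integral into $I + I\!I$, bound $I$ trivially, bound $I\!I$ by integrating by parts over $X$ via Lemma~\ref{lem:partsHn}, and optimise over $\lambda$. The one genuine variation is your choice of extension, $\tilde\varphi_2(x_1,x_2,\dots,x_n):=\varphi_2(0,x_2,\dots,x_n)$, which is constant along the orbits of the parabolic isometries $x_1\mapsto x_1+c$; the paper instead uses the Fermi-type extension $\tilde\varphi_2(x_1,x'',x_n):=\varphi_2\bigl(0,x'',\sqrt{x_1^2+x_n^2}\bigr)$, constant along geodesics normal to $S$. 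Both preserve $\|\nabla_g\cdot\|_{L^\infty}$ exactly, and yours has the small advantage that $e_1\tilde\varphi_2\equiv 0$ is manifest. You also correctly point out that your extension is not compactly supported in $\mathbb{H}^n$ and therefore insert a cutoff $\theta_\varepsilon$ before applying Lemma~\ref{lem:partsHn}; that precaution is in fact equally needed for the paper's extension, whose support is likewise not bounded away from $\{x_n=0\}$. One inaccuracy in your closing remark: the $\lambda\ge 1$ regime of Lemma~\ref{lem:decompHmsimple} is not the obstacle you suggest. Because $\mathbb{H}^m$ has infinite volume and supports Hardy's inequality (Lemma~\ref{lem:SobHn}), the Morrey estimate there holds in the unqualified form $\|\varphi\|_{L^\infty(\mathbb{H}^m)}\le C\|\nabla_g\varphi\|_{L^p(\mathbb{H}^m)}$ for $p>m$, with no mean-subtraction, so one can simply take $\varphi_2=0$ and $\varphi_1=\varphi$ when $\lambda\ge 1$. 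The real work is at scales $0<\lambda<1$, where the paper averages $\varphi$ by the parabolic flow $v\mapsto(x'+x_m e^{v_m}v',x_m e^{v_m})$, bounds $\nabla_g\varphi_2$ by H\"older, and bounds $\varphi_1$ by writing $\varphi-\varphi_2$ as an integral in the scale parameter and integrating by parts in $v$ --- in the spirit of, but differing in detail from, the pointwise-Morrey cancellation argument you gesture toward.
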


The $S$ will be called a vertical hyperplane in $\mathbb{H}^n$. It is a totally geodesic submanifold of $\mathbb{H}^n$. We will consider all hyperbolic hyperplanes in $\mathbb{H}^n$, that is the image of $S$ under all isometries of $\mathbb{H}^n$. The set of all such hypersurfaces in $\mathbb{H}^n$ will be denoted by $\mathcal{S}$; it will consist of all Euclidean parallel translates of $S$ in the $x'$-direction, and all Euclidean northern hemispheres whose centers lie on the plane $\{x_n=0\}$.

The proof of Proposition \ref{prop:Hn} in turn depends on the following two lemmas. The first one is a simple lemma about integration by parts, which is the counterpart of Lemma~\ref{lem:parts}:

\begin{lem} \label{lem:partsHn}
Assume $n \geq 2$. Let $f$, $S$, $X$, $\nu$ be as in Proposition~\ref{prop:Hn}. Then for any compactly supported smooth function $\psi$ on $\mathbb{H}^n$, we have
$$
\int_{S} \langle f, \nu \rangle_g \psi \,  \dif V'_g = -\int_X \langle f, \nabla_g \psi \rangle_g \,   \dif V_g.
$$
\end{lem}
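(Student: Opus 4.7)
The plan is to imitate the proof of Lemma~\ref{lem:parts} almost verbatim, replacing the Euclidean divergence theorem with its counterpart on the Riemannian manifold $(\mathbb{H}^n, g)$. The two ingredients are (i) the product rule $\operatorname{div}_g(\psi f) = \langle \nabla_g \psi, f \rangle_g + \psi \operatorname{div}_g f$, which together with the hypothesis $\operatorname{div}_g f = 0$ reduces to $\operatorname{div}_g(\psi f) = \langle \nabla_g \psi, f \rangle_g$, and (ii) the divergence theorem on the domain $X$, whose boundary inside $\mathbb{H}^n$ is precisely $S$.

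The key observation for signs is that, with $X = \{x_1 > 0\}$ and $\nu = e_1$ as in Proposition~\ref{prop:Hn}, the vector $\nu$ points \emph{into} $X$ at a point of $S$, so the outward unit normal to $\partial X$ along $S$ is $-\nu$. Applying the divergence theorem on $X$ to the vector field $\psi f$ (which is smooth and compactly supported in $\overline{X}$) therefore yields
\[
\int_X \langle \nabla_g \psi, f \rangle_g \, \dif V_g = \int_X \operatorname{div}_g(\psi f) \, \dif V_g = \int_S \langle \psi f, -\nu \rangle_g \, \dif V'_g = -\int_S \psi \langle f, \nu \rangle_g \, \dif V'_g,
\]
which is exactly the desired identity after rearranging.

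The only mild subtlety is justifying the use of the divergence theorem on the non-compact region $X \subset \mathbb{H}^n$. This is handled by the compact support of $\psi$: one can work on $X \cap B$, where $B$ is a sufficiently large geodesic ball in $\mathbb{H}^n$ containing $\operatorname{supp}\psi$. The boundary $\partial(X \cap B)$ consists of a piece of $S$ (containing $\operatorname{supp}\psi \cap S$) and a portion of $\partial B$ on which $\psi f$ vanishes identically, so the boundary integral reduces to the one over $S$ displayed above. No obstacle beyond this bookkeeping is expected, since the Hodge--Koszul computation of $\operatorname{div}_g(\psi f)$ does not depend on the specific choice of frame or on the special structure of $\mathbb{H}^n$.
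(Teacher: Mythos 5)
Your proof is correct and follows essentially the same route as the paper: rewrite $\langle f,\nu\rangle_g \psi$ as $\langle \psi f, \nu\rangle_g$, apply the Riemannian divergence theorem on $X$ observing that $\nu=e_1$ is the inward unit normal to $\partial X$, and use $\operatorname{div}_g(\psi f)=\langle f,\nabla_g\psi\rangle_g$ since $\operatorname{div}_g f=0$. The extra paragraph justifying the divergence theorem on the non-compact $X$ via compact support of $\psi$ is a welcome but minor piece of bookkeeping left implicit in the paper.
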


The second one is a decomposition lemma for functions on $S$, which is  the counterpart of Lemma~\ref{lem:decomp}:

\begin{lem} \label{lem:decompHn}
Assume $n \geq 2$. Let $\varphi$ be a smooth function with compact support on $S$. For any $\lambda > 0$, there exists a decomposition
$$
\varphi = \varphi_1 + \varphi_2 \quad \text{on $S$},
$$
and an extension $\tilde{\varphi}_2$ of $\varphi_2$ to $\mathbb{H}^n$, such that $\tilde{\varphi}_2$ is smooth with compact support on $\mathbb{H}^n$, and
$$
\|\varphi_1\|_{L^{\infty}(S)} \leq C \lambda^{\frac{1}{n}} \|\nabla_g \varphi \|_{L^n(S)},
$$
with
$$
\|\nabla_g \tilde{\varphi}_2\|_{L^{\infty}(\mathbb{H}^n)} \leq C \lambda^{\frac{1}{n}-1}\|\nabla_g \varphi \|_{L^n(S)}.
$$
\end{lem}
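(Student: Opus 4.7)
My plan follows the template of Lemma~\ref{lem:decomp}, using the simply transitive isometric action on $S \cong \mathbb{H}^{n-1}$ of the solvable group of upper-half-space similarities
$$
\Psi_p : y = (y_2,\ldots,y_n) \longmapsto (x_2 + x_n y_2, \ldots, x_{n-1} + x_n y_{n-1}, x_n y_n),
$$
indexed by $p = (x_2,\ldots,x_n) \in S$. Each $\Psi_p$ is a hyperbolic isometry taking the base point $p_0 := (0,\ldots,0,1)$ to $p$. The first step is to reduce the lemma to constructing a smooth, compactly supported $\varphi_2$ on $S$ satisfying $\|\varphi - \varphi_2\|_{L^\infty(S)} \leq C\lambda^{1/n}\|\nabla_g\varphi\|_{L^n(S)}$ and $\|\nabla_g\varphi_2\|_{L^\infty(S)} \leq C\lambda^{1/n-1}\|\nabla_g\varphi\|_{L^n(S)}$. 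The extension is then $\tilde{\varphi}_2(x_1,\ldots,x_n) := \varphi_2(x_2,\ldots,x_n)\,\chi(x_1/M)$ for a fixed $\chi \in C^\infty_c(\mathbb{R})$ with $\chi(0) = 1$ and $M$ chosen large enough that the normal component $|e_1 \tilde{\varphi}_2|_g = (x_n/M)|\chi'(x_1/M)||\varphi_2|$ is absorbed into the target bound; this is feasible because $x_n$ is bounded on $\operatorname{supp}\tilde{\varphi}_2$, while the tangential components $|e_j \tilde{\varphi}_2|_g = |\chi(x_1/M)||e_j \varphi_2|_g$ ($2 \leq j \leq n$) are automatically controlled by $\|\nabla_g \varphi_2\|_{L^\infty(S)}$.

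For $\varphi_2$ I would imitate the Euclidean averaging construction: fix a smooth nonnegative bump $\tilde{\eta}_\lambda$ on $\mathbb{H}^{n-1}$ depending only on $d_g(\cdot,p_0)$, supported in $B_\lambda(p_0)$, with $|\nabla_g\tilde{\eta}_\lambda|_g \lesssim \lambda^{-1}$, and set
$$
\varphi_2(p) := c_\lambda^{-1} \int_S \tilde{\eta}_\lambda(\Psi_p^{-1}(q))\,\varphi(q)\,\dif V'_g(q), \qquad c_\lambda := \int_S \tilde{\eta}_\lambda\,\dif V'_g,
$$
which is independent of $p$ because $\Psi_p$ is an isometry and $\tilde{\eta}_\lambda$ is radial about $p_0$. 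Writing $\varphi_1 = \varphi - \varphi_2$ in the symmetric form $\varphi_1(p) = c_\lambda^{-1}\int_S \tilde{\eta}_\lambda(\Psi_p^{-1}(q))[\varphi(p) - \varphi(q)]\,\dif V'_g(q)$, the bound on $\|\varphi_1\|_{L^\infty(S)}$ follows from the hyperbolic Morrey--Sobolev estimate $|\varphi(p) - \varphi(q)| \leq C\,d_g(p,q)^{1/n}\|\nabla_g\varphi\|_{L^n(S)}$ (valid because $n > n-1 = \dim S$), combined with $d_g(p, q) \leq \lambda$ for $q$ in the support of $\tilde{\eta}_\lambda \circ \Psi_p^{-1}$.

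For the gradient of $\varphi_2$, the crucial ingredient (as in the proof of Lemma~\ref{lem:decomp}) is the invariance identity $\int_S \nabla_p[\tilde{\eta}_\lambda(\Psi_p^{-1}(q))]\,\dif V'_g(q) = 0$, obtained by differentiating the constancy of $c_\lambda$ in $p$. This gives
$$
\nabla_p\varphi_2(p) = c_\lambda^{-1}\int_S \nabla_p[\tilde{\eta}_\lambda(\Psi_p^{-1}(q))]\,[\varphi(q) - \varphi(p)]\,\dif V'_g(q),
$$
and combining Morrey with the pointwise estimate $|\nabla_p[\tilde{\eta}_\lambda(\Psi_p^{-1}(q))]|_g \leq C\lambda^{-1}$ (obtained via a direct chain-rule computation in the orthonormal frame $\{e_j\}_{j=2}^n$, which explicitly yields $e_j\eta_\lambda^{(p)}(q) = -(\partial_{z_j}\tilde{\eta}_\lambda)(z)$ for $j < n$ and $e_n\eta_\lambda^{(p)}(q) = -\sum_{k=2}^n z_k (\partial_{z_k}\tilde{\eta}_\lambda)(z)$ at $z := \Psi_p^{-1}(q)$) closes the estimate.

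I expect the main obstacle to lie in making these bounds go through uniformly in $\lambda > 0$. Because hyperbolic balls in $\mathbb{H}^{n-1}$ have volume growing exponentially in the radius, a naive hyperbolic bump satisfies $c_\lambda \ll v(\lambda) := |B_\lambda(p_0)|$ once $\lambda$ is large, and the final accounting $|\nabla_p\varphi_2(p)| \lesssim (v(\lambda)/c_\lambda)\,\lambda^{1/n-1}\|\nabla_g\varphi\|_{L^n(S)}$ breaks down. I expect to resolve this either by a carefully tuned choice of $\tilde{\eta}_\lambda$ (for instance, a bump plateaued at $1$ on most of $B_\lambda(p_0)$ with a transition layer whose thickness depends on $\lambda$), or by handling the large-$\lambda$ regime separately---exploiting the compact support of $\varphi$: on the subset where $\operatorname{supp}\varphi$ lies entirely inside the averaging ball, $\varphi_2$ is constant and $\nabla_g\varphi_2$ vanishes, so one needs only estimate near a shell where the exponential decay of $c_\lambda^{-1}$ can be brought to bear.
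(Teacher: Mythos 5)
Your plan is a genuine variant of the paper's, not a reproduction of it. The paper first proves a stand-alone decomposition lemma on $\mathbb{H}^m$ (Lemma~\ref{lem:decompHmsimple}) by averaging against a \emph{Euclidean-type} kernel $\lambda^{-m}\eta(\lambda^{-1}v)$ in the solvable-group coordinates, then applies it with $m=n-1$, $p=n$, and extends by the fold $\tilde\varphi_2(x_1,x'',x_n) := \varphi_2\bigl(0,x'',\sqrt{x_1^2+x_n^2}\bigr)$, which satisfies $\|\nabla_g\tilde\varphi_2\|_{L^\infty(\mathbb{H}^n)}\le\|\nabla_g\varphi_2\|_{L^\infty(S)}$ with no free parameter. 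Your cutoff extension $\varphi_2(x_2,\dots,x_n)\,\chi(x_1/M)$ also works, but forces $M$ to depend on both $\lambda$ and $\operatorname{supp}\varphi$; the fold is cleaner. Your averaging on $S$ itself --- hyperbolically radial kernel, normalizing constant $c_\lambda$, and the constancy-in-$p$ identity --- mimics the spherical Lemma~\ref{lem:decomp} rather than Lemma~\ref{lem:decompHmsimple}, whose proof instead writes $\varphi_1=-\int_0^\lambda\frac{d}{ds}[\cdots]\,ds$ and integrates by parts in the group variable, bounding $\nabla_g\varphi_2$ by a direct H\"older inequality with no normalizing constant at all.

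The substantive gap is the one you already sense: your kernel $\tilde\eta_\lambda$ is radial and supported in $B_\lambda(p_0)$, and once $\lambda\gtrsim 1$ both the normalization and the chain-rule bound degenerate. Your own formula $e_n\eta_\lambda^{(p)}(q)=-\sum_k z_k\partial_{z_k}\tilde\eta_\lambda(z)$ carries the factor $|z|/z_n$, which is of order $e^{2\lambda}$ on $B_\lambda(p_0)$, so the claimed bound $\bigl|\nabla_p[\tilde\eta_\lambda(\Psi_p^{-1}q)]\bigr|_g\le C\lambda^{-1}$ fails for $\lambda\ge 1$; a plateaued profile does not help, since the offending factor comes from the conversion $|\nabla_e\tilde\eta_\lambda|=|\nabla_g\tilde\eta_\lambda|/z_n$ and not from the shape of $\eta$. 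The correct fix is far simpler than either of the remedies you sketch and is exactly what the paper does: for $\lambda\ge 1$ take $\varphi_1=\varphi$ and $\varphi_2=0$, and invoke the global Morrey estimate $\|\varphi\|_{L^\infty(S)}\le C\|\nabla_g\varphi\|_{L^n(S)}$ on $\mathbb{H}^{n-1}$ (obtained from local Morrey plus the Hardy inequality underlying Lemma~\ref{lem:SobHn}), which gives $\|\varphi_1\|_{L^\infty}\le C\|\nabla_g\varphi\|_{L^n}\le C\lambda^{1/n}\|\nabla_g\varphi\|_{L^n}$ since $\lambda^{1/n}\ge 1$. With this case split inserted, your argument for $0<\lambda<1$ is sound.
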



We postpone the proofs of Lemmas~\ref{lem:partsHn} and \ref{lem:decompHn} to the end of this section.

Now we are ready for the proof of Proposition~\ref{prop:Hn}.

\begin{proof}[Proof of Proposition~\ref{prop:Hn}]
Let $f, \phi$ be as in the statement of Theorem~\ref{thm:hyp}. Apply Lemma~\ref{lem:decompHn} to $\varphi = \langle \phi, \nu \rangle_g$, where $\lambda > 0$ is to be chosen. Then since
$$
\|\nabla_g \langle \phi, \nu \rangle_g \|_{L^n(S)} \leq C \|\phi\|_{W^{1,n}(S)},
$$
(this follows since $|e_k \langle \phi, \nu \rangle| = |\langle \nabla_{e_k} \phi, \nu \rangle + \langle \phi, \nabla_{e_k} \nu \rangle| \leq |\nabla_g \phi|_g + |\phi|_g$ for all $k$), there exists a decomposition
$$
\langle \phi, \nu \rangle_g = \varphi_1 + \varphi_2 \quad \text{on $S$},
$$
and an extension $\tilde{\varphi}_2$ of $\varphi_2$ to $\mathbb{H}^n$, such that $\tilde{\varphi}_2 \in C^{\infty}_c(\mathbb{H}^n)$,
$$
\|\varphi_1\|_{L^{\infty}(S)} \leq C \lambda^{\frac{1}{n}} \|\phi\|_{W^{1,n}(S)},
$$
and
$$
\|\nabla_g \tilde{\varphi}_2\|_{L^{\infty}(\mathbb{H}^n)} \leq C \lambda^{\frac{1}{n}-1} \|\phi\|_{W^{1,n}(S)}.
$$
Now
\begin{align*}
\int_{S} \langle f, \nu \rangle_g \langle \phi, \nu \rangle_g \, \dif V'_g
&=
\int_{S} \langle f, \nu \rangle_g \varphi_1 \, \dif V'_g  + \int_{S} \langle f, \nu \rangle_g \varphi_2 \, \dif V'_g  \\
&= I + I\!I.
\end{align*}
In the first term, we estimate trivially
$$
|I| \leq \|f\|_{L^1(S)} \|\varphi_1\|_{L^{\infty}(S)} \leq C \lambda^{\frac{1}{n}} \|f\|_{L^1(S)} \|\phi\|_{W^{1,n}(S)}.
$$
In the second term, we first integrate by parts using Lemma~\ref{lem:partsHn}, with $\psi = \tilde{\varphi}_2$, and obtain
$$
I\!I = -\int_{X} \langle f, \nabla_g \tilde{\varphi}_2 \rangle_g \, \dif V_g,
$$
so
\begin{align*}
|I\!I| &\leq \|f\|_{L^1(X)}  \|\nabla_g \tilde{\varphi}_2\|_{L^{\infty}(\mathbb{H}^n)}
\\
&\leq C \lambda^{\frac{1}{n}-1} \|f\|_{L^1(X)} \|\phi\|_{W^{1,n}(S)}.
\end{align*}
Together, by choosing $\lambda = \frac{\|f\|_{L^1(X)}}{\|f\|_{L^1(S)}}$, we get
$$
\left| \int_{S} \langle f, \nu \rangle_g \langle \phi, \nu \rangle_g \dif V'_g \right| \leq C \|f\|_{L^1(X)}^{\frac{1}{n}} \|f\|_{L^1(S)}^{1-\frac{1}{n}} \|\phi\|_{W^{1,n}(S)}
$$
as desired.
\end{proof}

We will now deduce Theorem~\ref{thm:hyp} from Proposition~\ref{prop:Hn}. The idea is to average \eqref{eq:HnS} over all images of $S$ under isometries in $\mathbb{H}^n$ (i.e.\ all hypersurfaces in the collection $\mathcal{S}$).

\begin{proof}[Proof of Theorem~\ref{thm:hyp}]
First, for each fixed $x  = (x',x_n) \in \mathbb{H}^n$, we have the following analogue of the identity \eqref{eq:sphavg}, used in the proof of Theorem~\ref{thm:vs}:
$$
\langle f(x), \phi(x) \rangle_g = c \int_{\mathbb{S}^{n-1}} \langle f(x), x_n \omega \rangle_g \langle \phi(x), x_n \omega \rangle_g \, \dif \sigma(\omega)
$$
Here we are identifying $\omega \in \mathbb{S}^{n-1}$ with the corresponding tangent vector to $\mathbb{H}^n$ based at the point $x$. (Note then $x_n \omega$ has length 1 with respect to the metric $g$ at $x$, so $x_n \omega$ belongs to the unit sphere bundle at $x$.) Furthermore, since the above integrand is even in $\omega$, we may replace the integral over $\mathbb{S}^{n-1}$ by the integral only over the northern hemisphere $\mathbb{S}^{n-1}_+ := \{\omega \in \mathbb{S}^{n-1} \colon \omega_n > 0\}$. Hence to estimate $\int_{\mathbb{H}^n} \langle f(x), \phi(x) \rangle_g \dif V_g$, it suffices to estimate
\begin{equation} \label{eq:avHn}
\int_{\mathbb{R}^n_+} \int_{\mathbb{S}^{n-1}_+} \langle f(x), x_n \omega \rangle_g \langle \phi(x), x_n \omega \rangle_g \, \dif \sigma(\omega) \frac{\dif x}{x_n^n}.
\end{equation}
We will compute this integral by making a suitable change of variables.

To do so, given $x \in \mathbb{R}^n_+$ and $\omega \in \mathbb{S}^{n-1}_+$, 
let $S(x,\omega)$ be the hyperbolic hypersurface in $\mathcal{S}$ passing through $x$ with normal vector $\omega$ at $x$. In other words, $S(x,\omega)$ would be an Euclidean hemisphere, with center on the plane $\{x_n=0\}$; we denote the center of this Euclidean hemisphere by $(z,0)$, where $z = z(x,\omega)$.

For each fixed $x \in \mathbb{R}^n_+$, the map $\omega \mapsto z(x,\omega)$ provides an invertible change of variables from $\mathbb{S}^{n-1}_+$ to $\mathbb{R}^{n-1}$. Thus we are led to parametrize the integral in \eqref{eq:avHn} by \(z\) instead of \(\omega\). In order to do that we observe that the vectors \(x - (z, 0)\) and \(\omega\) are collinear. This implies that if $z = z(x,\omega)$, then
\[
 \omega = \frac{x - (z, 0)}{\vert x - (z, 0)\vert}.
\]
(Here $|x-(z,0)|$ is the Euclidean norm of $x-(z,0)$.) Write $\Phi_x (z)$ for the right hand side of the above equation. We view $\Phi_x$ as a map \(\Phi_x : \mathbb{R}^{n - 1} \to \mathbb{S}^{n - 1}_+ \subset \mathbb{R}^n \), and compute the Jacobian of the map.
We have 
\[
 (D \Phi_x)^t(z) =\frac{1}{\vert x - (z, 0)\vert} \Bigl(( -I, 0) + \frac{ (x' - z) \otimes (x - (z, 0))^t}{\vert x - (z, 0)\vert^2} \Bigr),
\]
(here we think of $x$, $z$ as column vectors, and $D\Phi_x$ as an $(n-1) \times n$ matrix). Thus 
\[
 (D \Phi_x)^t D \Phi_x(z)
 =\frac{1}{\vert x - (z, 0)\vert^2} \Bigl(I - \frac{(x' - z) \otimes (x' - z)^t}{\vert x - (z, 0)\vert^2}\Bigr).
\]
By computing the determinant in a basis that contains \(x' - z\), we get
\[
\begin{split}
 \operatorname{Jac} \Phi_x (z) &= \sqrt{\det [(D \Phi_x)^t D \Phi_x(z)]}\\
 &= \frac{1}{\vert x - (z, 0) \vert^{n - 1}} \sqrt{1 - \frac{\vert x' - z\vert^2}{\vert x - (z, 0)\vert^2}} \\
 &=\frac{x_n}{\vert x - (z, 0) \vert^n}.
\end{split}
\]
By a change of variable \(\omega = \Phi_x (z)\), and using Fubini's theorem, we see that (\ref{eq:avHn}) is equal to
\begin{equation} \label{eq:avHntemp}
\int_{\mathbb{R}^{n-1}} \int_{\mathbb{R}^n_+} \frac{\langle f(x), x_n \Phi_x (z) \rangle_g \langle \phi(x), x_n \Phi_x (z) \rangle_g}{\vert x - (z, 0)\vert^{n}} \, \frac{\dif x}{x_n^{n - 1}} \dif z.
\end{equation}
Now we fix $z \in \mathbb{R}^{n-1}$, and compute the inner integral over $x$ by integrating over successive hemispheres of radius $r$ centered at $(z,0)$. More precisely, let $S(z,r)$ be the Euclidean northern hemisphere with center $(z,0)$ and of radius $r > 0$. Then $S(z,r) \in \mathcal{S}$, and for any $z \in \mathbb{R}^{n-1}$, we have
\begin{align*}
&\int_{\mathbb{R}^n_+} \frac{\langle f(x), x_n \Phi_x (z) \rangle_g \langle \phi(x), x_n \Phi_x (z) \rangle_g}{\vert x - (z, 0)\vert^{n}} \, \frac{\dif x}{x_n^{n - 1}} \\
=& \int_0^{\infty} \int_{x \in S(z,r)} \langle f(x), x_n \Phi_x (z) \rangle_g \langle \phi(x), x_n \Phi_x (z) \rangle_g \, \frac{\dif \sigma(x)}{x_n^{n - 1}} \frac{\dif r}{r^n},
\end{align*}
where $\dif \sigma(x)$ is the Euclidean surface measure on $S(z,r)$. However, if $x \in S(z,r)$, then $x_n \Phi_x(z)$ is precisely the upward unit normal to $S(z,r)$ at $x$. Also, if $\dif V'_g$ is the induced surface measure on $S(z,r)$ from the hyperbolic metric on $\mathbb{H}^n$, then $$
\dif V'_g = \frac{ \dif \sigma(x)}{x_n^{n-1} r};
$$ 
indeed if we write $\omega = \frac{x-(z,0)}{|x-(z,0)|}$, then at $x \in S(z,r)$ we have
$$
\dif V'_g = i_{x_n \omega} \dif V_g = i_{x_n \omega} \frac{\dif x}{x_n^n} = \frac{i_{r\omega} \dif x}{x_n^{n-1} r} = \frac{\dif \sigma(x)}{x_n^{n-1} r}.
$$
(here $i$ denotes the interior product of a vector with a differential form). Hence the integral (\ref{eq:avHntemp}) is just equal to
\begin{equation} \label{eq:avHntemp2}
\int_{\mathbb{R}^{n-1}} \int_0^{\infty} \int_{S(z,r)} \langle f, \nu \rangle_g \langle \phi, \nu \rangle_g \, \dif V'_g \frac{\dif r}{r^{n-1}} \dif z.
\end{equation}
By Proposition~\ref{prop:Hn} and its invariance under isometries of the hyperbolic space \(\mathbb{H}^n\), we have 
\begin{align*}
&\left| \int_{S(z,r)} \langle f, \nu \rangle_g \langle \phi, \nu \rangle_g \, \dif V'_g  \right| \\
\leq & C \|f\|_{L^1(\mathbb{H}^n)}^{\frac{1}{n}} \left( \int_{S(z,r)} |f|_g \, \dif V'_g \right)^{1-\frac{1}{n}} \left( \int_{S(z,r)} \left( |\nabla_g \phi|_g^n + |\phi|_g^n \right) \, \dif V'_g \right)^{\frac{1}{n}}.
\end{align*}
Hence by H\"older's inequality, (\ref{eq:avHntemp2}) is bounded by
\begin{align*}
&C \|f\|_{L^1(\mathbb{H}^n)}^{\frac{1}{n}} \left(  \int_{\mathbb{R}^{n-1}} \int_0^{\infty}  \int_{S(z,r)} |f|_g \, \dif V'_g \frac{\dif r}{r^{n-1}} \dif z \right)^{1-\frac{1}{n}} \\
&\quad \quad \cdot \left(  \int_{\mathbb{R}^{n-1}} \int_0^{\infty}  \int_{S(z,r)} \left( |\nabla_g \phi|_g^n + |\phi|_g^n \right)  \, \dif V'_g \frac{\dif r}{r^{n-1}} \dif z \right)^{\frac{1}{n}}.
\end{align*}
But undoing our earlier changes of variable, we see that
\begin{align*}
&\int_{\mathbb{R}^{n-1}} \int_0^{\infty}  \int_{S(z,r)} |f|_g \, \dif V'_g \frac{\dif r}{r^{n-1}} \dif z \\
=& \int_{\mathbb{R}^{n-1}} \int_0^{\infty}  \int_{x \in S(z,r)} |f(x)|_g \, \frac{\dif \sigma(x)}{x_n^{n-1}} \frac{\dif r}{r^n} \dif z \\
=& \int_{\mathbb{R}^{n-1}} \int_{\mathbb{R}^n_+} \frac{|f(x)|_g}{|x-(z,0)|^n} \frac{\dif x}{x_n^{n-1}} \dif z \\
=&  \int_{\mathbb{R}^n_+} |f(x)|_g \left( \int_{\mathbb{R}^{n-1}} \frac{x_n}{|x-(z,0)|^n}  \dif z \right) \frac{\dif x}{x_n^n} \\
=& C \|f\|_{L^1(\mathbb{H}^n)}.
\end{align*}
Similarly,
\begin{align*}
\int_{\mathbb{R}^{n-1}} \int_0^{\infty}  \int_{S(z,r)} \left( |\nabla_g \phi|_g^n + |\phi|_g^n \right) \, \dif V'_g \frac{\dif r}{r^{n-1}} \dif z = C (\|\nabla_g \phi\|_{L^n(\mathbb{H}^n)}^n + \| \phi\|_{L^n(\mathbb{H}^n)}^n).
\end{align*}
Altogether,  (\ref{eq:avHntemp2}) (and hence \eqref{eq:avHn}) is bounded by
$$
C\|f\|_{L^1(\mathbb{H}^n)} (\|\nabla_g \phi\|_{L^n(\mathbb{H}^n)} + \| \phi\|_{L^n(\mathbb{H}^n)}).
$$
This is almost what we want, except that on the right hand side we have an additional $\|\phi\|_{L^n(\mathbb{H}^n)}$. To fix this, one applies Lemma~\ref{lem:SobHn} below, with $p = n$, and the desired conclusion of Theorem~\ref{thm:hyp} follows.
\end{proof}

\begin{lem} \label{lem:SobHn}
Assume $n \geq 2$. For any compactly supported smooth vector field $\phi$ on $\mathbb{H}^{n}$, and any $1 \leq p < \infty$, we have
$$
\|\phi\|_{L^{p}(\mathbb{H}^{n})} \leq C \|\nabla_g \phi\|_{L^{p}(\mathbb{H}^{n})}.
$$
\end{lem}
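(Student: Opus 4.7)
My plan is to reduce the vector-field inequality to a scalar Hardy-type inequality on $\mathbb{H}^n$, exploiting the orthonormal frame $\{e_i\}$ of this section together with the identities \eqref{eq:covar_nj} and \eqref{eq:covar_nn}.

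The scalar step I would establish first is: for every $u \in C_c^\infty(\mathbb{H}^n)$ and every $1 \le p < \infty$,
$$\|u\|_{L^p(\mathbb{H}^n)} \;\le\; \frac{p}{n-1}\,\|e_n u\|_{L^p(\mathbb{H}^n)}.$$
The crucial ingredient is $\operatorname{div}_g e_n = -(n-1)$, which follows by a direct calculation in the upper half-space model from $e_n = x_n \partial_{x_n}$ and $\dif V_g = x_n^{-n}\,\dif x$. Expanding $\operatorname{div}_g(|u|^p e_n) = e_n(|u|^p) - (n-1)|u|^p$ and integrating over $\mathbb{H}^n$ (no boundary terms, since $u$ is compactly supported there) yields
$$(n-1)\int_{\mathbb{H}^n}|u|^p\,\dif V_g \;=\; p\int_{\mathbb{H}^n}|u|^{p-2}u\,e_n(u)\,\dif V_g,$$
where for $1 \le p < 2$ the right-hand side is interpreted via the Lipschitz chain rule (approximating $|u|^p$ by $(u^2+\varepsilon^2)^{p/2}$ and letting $\varepsilon \to 0^+$ if one prefers a fully classical argument). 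H\"older's inequality in the form $\int|u|^{p-1}|e_n u|\,\dif V_g \le \|u\|_{L^p}^{p-1}\|e_n u\|_{L^p}$ then delivers the scalar bound.

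To pass from scalars to vector fields, I would expand $\phi = \sum_{j=1}^n \phi^j e_j$ with $\phi^j := \langle \phi, e_j\rangle_g \in C_c^\infty(\mathbb{H}^n)$. Because $\nabla_{e_n} e_j = 0$ for every $j$ by \eqref{eq:covar_nj} and \eqref{eq:covar_nn}, we have $\nabla_{e_n}\phi = \sum_j (e_n \phi^j)\,e_j$, and hence the pointwise bound $|e_n \phi^j| \le |\nabla_{e_n}\phi|_g \le |\nabla_g \phi|_g$ holds for each $j$ (using $|e_n|_g = 1$). Applying the scalar Hardy inequality to each $\phi^j$, summing in $j$, and invoking the elementary equivalence $|\phi|_g^p \simeq \sum_j |\phi^j|^p$ (with constants depending only on $n$ and $p$), yields the desired estimate.

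The main obstacle is the scalar step: the integration-by-parts argument produces a coercive inequality only because $\operatorname{div}_g e_n$ has a strictly negative constant value, and this constant is $-(n-1)$, which is negative precisely under the assumption $n \ge 2$ of the lemma. Once the scalar inequality is in hand, the vector-field conclusion is nearly automatic, thanks to the parallelism $\nabla_{e_n} e_j = 0$ already recorded earlier in this section.
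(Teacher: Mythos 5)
Your proof is correct and follows the same two-step strategy as the paper: first a scalar Hardy-type inequality $\|u\|_{L^p(\mathbb{H}^n)} \leq \frac{p}{n-1}\|e_n u\|_{L^p(\mathbb{H}^n)}$ for $u \in C^\infty_c(\mathbb{H}^n)$, then a reduction to the scalar components $\phi^j = \langle\phi,e_j\rangle_g$ via the parallelism $\nabla_{e_n}e_j = 0$ recorded in \eqref{eq:covar_nj} and \eqref{eq:covar_nn}. The only variation is in how the scalar step is proved: the paper slices in $x'$ and invokes the classical one-dimensional Hardy inequality on each vertical fiber, whereas you derive it intrinsically from $\operatorname{div}_g e_n = -(n-1)$ together with the divergence theorem and H\"older; these two derivations are equivalent and give the same constant.
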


\begin{proof}
In fact, for any function $\Phi \in C^{\infty}_c(\mathbb{H}^{n})$, and any exponent $1 \leq p < \infty$, we have, from Hardy's inequality, that
\begin{equation} \label{eq:HardyLp}
\|\Phi\|_{L^p(\mathbb{H}^{n})} \leq C \left\|e_n \Phi \right\|_{L^p(\mathbb{H}^{n})}.
\end{equation}
This is because
$$
\int_0^{\infty} |\Phi(x)|^p \frac{\dif x_n}{x_n^{n}} \leq \left(\frac{p}{n-1}\right)^p \int_0^{\infty} |e_n \Phi|^p (x) \frac{\dif x_n}{x_n^{n}}
$$
by Hardy's inequality. \eqref{eq:HardyLp} then follows by integrating over all $x' \in \mathbb{R}^{n-1}$ with respect to $\dif x'$. Now we apply \eqref{eq:HardyLp} to $\Phi = \left\langle \phi, e_j \right\rangle$, $1 \leq j \leq n-1$. In view of \eqref{eq:covar_nj}, we have
$$
e_n \Phi  = \langle \nabla_{e_n} \phi, e_j \rangle + \langle \phi, \nabla_{e_n} e_j \rangle = \langle \nabla_{e_n} \phi, e_j \rangle,
$$
so we get
$$
\|\langle \phi, e_j \rangle \|_{L^p(\mathbb{H}^{n})} \leq C \|\nabla_g \phi\|_{L^p(\mathbb{H}^{n})}.
$$
Similarly, we can apply \eqref{eq:HardyLp} to $\Phi = \langle \phi, e_n \rangle$, and use \eqref{eq:covar_nn} in place of \eqref{eq:covar_nj}. Altogether we see that
$$
\|\phi\|_{L^{p}(\mathbb{H}^{n})} \leq C \|\nabla_g \phi\|_{L^{p}(\mathbb{H}^{n})},
$$
as desired.
\end{proof}

We now turn to the proofs of Lemma~\ref{lem:partsHn} and~\ref{lem:decompHn}.

\begin{proof}[Proof of Lemma~\ref{lem:partsHn}]
Note that $\langle f, \nu \rangle_g \psi = \langle \psi f, \nu \rangle_g$, and $\nu$ is the inward unit normal to $\partial X$. Also $\dif V'_g$ agrees with the induced surface measure on $S$ from $\mathbb{H}^n$. So by the divergence theorem on $\mathbb{H}^n$, we have
$$
\int_S \langle f, \nu \rangle_g \psi \, \dif V'_g = - \int_X \operatorname{div}_g\, (\psi f) \dif V_g.
$$
But  since $\operatorname{div}_g\, f = 0$, we have
$$\operatorname{div}_g (\psi f) = \langle f, \nabla_g \psi \rangle_g + \psi \operatorname{div}_g f = \langle f, \nabla_g \psi \rangle_g ,$$ and the desired equality follows.
\end{proof}

The proof of Lemma~\ref{lem:decompHn} will be easy, once we establish the following lemma:

\begin{lem} \label{lem:decompHmsimple}
Let $\varphi$ be a smooth function with compact support on $\mathbb{H}^m$, $m \geq 1$. For any $p > m$ and $\lambda > 0$, there exists a decomposition
$$
\varphi = \varphi_1 + \varphi_2 \quad \text{on $\mathbb{H}^m$},
$$
such that $\varphi_2$ is smooth with compact support on $\mathbb{H}^m$, and
$$
\|\varphi_1\|_{L^{\infty}(\mathbb{H}^m)} \leq C \lambda^{1-\frac{m}{p}} \|\nabla_g \varphi\|_{L^p(\mathbb{H}^m)},
$$
with
$$
\|\nabla_g \varphi_2\|_{L^{\infty}(\mathbb{H}^m)} \leq C \lambda^{-\frac{m}{p}} \|\nabla_g \varphi\|_{L^{p}(\mathbb{H}^m)}.
$$
\end{lem}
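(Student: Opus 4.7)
The plan is to split the argument by scale, because $\mathbb{H}^m$ has no dilation isometries and so a single fixed bump function cannot be rescaled to produce mollifiers at all scales $\lambda > 0$, as was done on $\mathbb{R}^n$ or $\mathbb{S}^{n-1}$. I would treat a \emph{large-scale} regime $\lambda \geq 1$ trivially, and a \emph{small-scale} regime $0 < \lambda < 1$ by a direct isometry-invariant mollification.

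In the regime $\lambda \geq 1$, I would simply take $\varphi_2 = 0$ and $\varphi_1 = \varphi$. The bound on $\nabla_g \varphi_2$ is trivial. For $\|\varphi_1\|_\infty = \|\varphi\|_\infty$ I would combine the classical Morrey embedding on a unit-radius normal-coordinate ball (giving $|\varphi(x)| \leq C (\|\varphi\|_{L^p(B_g(x,1))} + \|\nabla_g \varphi\|_{L^p(B_g(x,1))})$ for $p > m$) with the scalar Hardy inequality \eqref{eq:HardyLp} used in the proof of Lemma~\ref{lem:SobHn}, which gives $\|\varphi\|_{L^p(\mathbb{H}^m)} \leq C \|\nabla_g \varphi\|_{L^p(\mathbb{H}^m)}$ for any compactly supported $\varphi$. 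Together they yield $\|\varphi\|_\infty \leq C \|\nabla_g \varphi\|_{L^p} \leq C \lambda^{1-m/p}\|\nabla_g\varphi\|_{L^p}$ since $\lambda \geq 1$.

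In the regime $0 < \lambda < 1$, I would fix a nonnegative $\tilde\eta \in C_c^\infty([0,\infty))$ equal to $1$ near $0$ and supported in $[0,1]$, and define the isometry-invariant smooth kernel
$$
K_\lambda(x,y) := c_\lambda^{-1}\, \tilde\eta\!\left(\frac{d_g(x,y)^2}{\lambda^2}\right), \qquad c_\lambda := \int_{\mathbb{H}^m} \tilde\eta\!\left(\frac{d_g(x,y)^2}{\lambda^2}\right) dV_g(y),
$$
where $d_g$ is hyperbolic distance; isometry-invariance of $d_g$ makes $c_\lambda$ independent of $x$, and the near-Euclidean behavior of small hyperbolic balls gives $c_\lambda \simeq \lambda^m$. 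I would then set $\varphi_2(x) := \int K_\lambda(x,y)\,\varphi(y)\,dV_g(y)$ and $\varphi_1 := \varphi - \varphi_2$. Since $\int K_\lambda(x,\cdot)\, dV_g = 1$, we have
$$
\varphi_1(x) = \int K_\lambda(x,y)\,[\varphi(x)-\varphi(y)]\,dV_g(y),
$$
and differentiating this normalization identity in $x$ gives $\int \partial_{v,x} K_\lambda(x,y)\, dV_g(y) = 0$ for any unit $v \in T_x\mathbb{H}^m$, hence
$$
\partial_v\varphi_2(x) = \int \partial_{v,x} K_\lambda(x,y)\,[\varphi(y)-\varphi(x)]\,dV_g(y).
$$
Using $|\partial_{v,x} K_\lambda(x,y)| \leq C\lambda^{-m-1}\mathbf{1}_{d_g(x,y) < \lambda}$ (which follows from $|\nabla_x d_g| \leq 1$) together with the local Morrey H\"older estimate $|\varphi(x) - \varphi(y)| \leq C d_g(x,y)^{1-m/p} \|\nabla_g \varphi\|_{L^p(\mathbb{H}^m)}$ valid for $d_g(x,y) \leq \lambda \leq 1$ and $p > m$, the two bounds in the lemma then follow by exactly the same computation as in the proof of Lemma~\ref{lem:decomp}. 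Compactness of the support of $\varphi_2$ is inherited from $\varphi$ since $K_\lambda(x,\cdot)$ is supported in $B_g(x,\lambda)$.

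The main obstacle is constructing the mollifier with the correct smoothness and scaling in the absence of dilation symmetry: one must pass to $d_g^2$ (globally smooth on $\mathbb{H}^m \times \mathbb{H}^m$) rather than $d_g$ itself to ensure $K_\lambda \in C^\infty$, and one must justify $c_\lambda \simeq \lambda^m$ and the pointwise bound on $\partial_{v,x} K_\lambda$ using the uniform comparability of hyperbolic and Euclidean geometry at scales $\leq 1$. Once the kernel is in place, the Morrey-based computation parallels the sphere case with $d_g$ in place of Euclidean distance.
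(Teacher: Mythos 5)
Your proposal is correct in spirit, and for the key mollification step it takes a genuinely different route from the paper. For $0 < \lambda < 1$, the paper mollifies via a convolution in ``horocyclic'' coordinates: for $x = (x', x_m)$ it sets
\[
\varphi_2(x) = \int_{\mathbb{R}^m}\varphi\bigl(x' + x_m e^{v_m}v',\, x_m e^{v_m}\bigr)\,\lambda^{-m}\eta(\lambda^{-1}v)\dif v,
\]
an average over the orbit of $x$ under the $NA$ part of the Iwasawa decomposition of the isometry group, and then estimates $e_i\varphi_2$ by passing derivatives through the integral, using that $e^{-v_m}$ and $|v_i|$ are bounded on the support of $\eta(\lambda^{-1}\cdot)$ when $0<\lambda<1$, followed by H\"older; the $\varphi_1$ bound comes from writing $\varphi_1$ as $-\int_0^\lambda \partial_s(\varphi\ast\eta_s)\dif s$ and integrating by parts in $v$. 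You instead use a rotationally symmetric geodesic kernel $K_\lambda(x,y) = c_\lambda^{-1}\tilde\eta(d_g(x,y)^2/\lambda^2)$ together with the normalization trick, mirroring the spherical construction in Lemma~\ref{lem:decomp}. Both are valid; yours is more intrinsic (it only uses $d_g$ and bounded geometry at scales $\le 1$, so it would transfer to more general manifolds), while the paper's exploits the solvable Lie group structure of $\mathbb{H}^m$ and yields very explicit formulas. The $\lambda \ge 1$ regime is treated identically in both (take $\varphi_2 = 0$ and invoke the global Morrey bound obtained from local Morrey plus Hardy).

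One small gap you should patch: the statement allows $m \ge 1$, but your $\lambda \ge 1$ argument invokes the Hardy inequality \eqref{eq:HardyLp}, which requires $m\ge 2$ (the constant $\bigl(p/(m-1)\bigr)^p$ degenerates at $m=1$), and indeed $\|\varphi\|_{L^\infty(\mathbb{H}^1)} \le C\|\nabla_g\varphi\|_{L^p(\mathbb{H}^1)}$ is false on $\mathbb{H}^1\cong\mathbb{R}$ by dilation invariance. So for $m=1$ and $\lambda\ge 1$ you cannot take $\varphi_2=0$. The paper notes this and dispatches $m=1$ separately (by quoting the Euclidean result, or by observing that its convolution construction works for all $\lambda>0$ when $m=1$). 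Your kernel construction also handles it: on $\mathbb{R}$ one has $c_\lambda\simeq\lambda$ for every $\lambda>0$ and the one-dimensional Morrey--H\"older estimate holds at all separations, so the argument you gave for $0<\lambda<1$ extends verbatim to all $\lambda>0$ when $m=1$; you just need to say so.
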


We postpone the proof of this lemma to the end of this section.

\begin{proof}[Proof of Lemma~\ref{lem:decompHn}]
Suppose $\varphi$ and $\lambda$ are as in Lemma~\ref{lem:decompHn}. We identify $S$ with $\mathbb{H}^m$, where $m=n-1$. (This is possible because the restriction of the metric of $\mathbb{H}^n$ to $S$ induces a metric on $S$ that is isometric to that of $\mathbb{H}^m$.) Using Lemma~\ref{lem:decompHmsimple}, with $p = n$, we obtain a decomposition $\varphi = \varphi_1 + \varphi_2$ on $S$, such that
$$
\|\varphi_1\|_{L^{\infty}(S)} \leq C \lambda^{\frac{1}{n}} \|\nabla_g \varphi\|_{L^{n}(S)},
$$
with
\begin{equation} \label{eq:varphi2bdd}
\|\nabla_g \varphi_2\|_{L^{\infty}(S)} \leq C \lambda^{\frac{1}{n}-1} \|\nabla_g \varphi\|_{L^{n}(S)}.
\end{equation}
We then extend $\varphi_2$ to $\mathbb{H}^n$ by setting for \((x_1, x'', x_n) \in \mathbb{R} \times \mathbb{R}^{n - 2} \times \mathbb{R}_+\)
\[
 \Tilde{\varphi}_2 (x_1, x'', x_n) = \varphi_2 \bigl(0, x'', \sqrt{x_1^2 + x_n^2}\bigr).
\]
One immediately checks that $\tilde{\varphi}_2$ is smooth with compact support on $\mathbb{H}^n$, with
\[
 \Vert \nabla_g \Tilde{\varphi}_2\Vert_{L^\infty (\mathbb{H}^n)} 
 \le \Vert \nabla_g \varphi_2 \Vert_{L^\infty (S)}.
\]
In view of (\ref{eq:varphi2bdd}), we obtain the desired bound for $\nabla_g \tilde{\varphi}_2$.
\end{proof}

It remains to prove Lemma~\ref{lem:decompHmsimple}.

\begin{proof}[Proof of Lemma~\ref{lem:decompHmsimple}]
When $m = 1$, the 1-dimensional hyperbolic space $\mathbb{H}^1$ is isometric to $\mathbb{R}$, and Lemma~\ref{lem:decompHmsimple} follows from its counterpart on $\mathbb{R}$ (see e.g. \cite{MR2078071}). Alternatively, it will follow from our treatment in the case $m \geq 2$, $0 < \lambda < 1$ below. 

So assume from now on $m \geq 2$. Suppose $\varphi$ is smooth with compact support on $\mathbb{H}^m$, $p > m$ and $\lambda > 0$. We will construct our desired decomposition $\varphi = \varphi_1 + \varphi_2$.
Recall that since \(p > m 
\), the Morrey inequality on \(\mathbb{H}^m\) implies that 
\begin{equation} \label{eq:MorreyHmLinfty}
  \Vert \varphi \Vert_{L^\infty (\mathbb{H}^m)} \le C \Vert \nabla_g \varphi \Vert_{L^p (\mathbb{H}^m)}.
\end{equation}
To see this, let $\zeta \in C^{\infty}_c(\mathbb{R})$ is a cut-off function such that $\zeta(s) = 1$ if $|s| \leq 1/2$, and $\zeta(s) = 0$ if $|s| \geq 1$. Let $x_0 := (0,1) \in \mathbb{H}^m$, and let $\zeta_{x_0}(x) := \zeta(d(x,x_0))$, where $d$ is the hyperbolic distance on $\mathbb{H}^m$. Consider the localization $\zeta_{x_0} \varphi$ of $\varphi$, to the unit ball centered at $x_0$. It satisfies
\begin{equation} \label{eq:W1pnorm}
\| \zeta_{x_0} \varphi \|_{W^{1,p}(\mathbb{R}^m)} \leq C \|\nabla_g \varphi\|_{L^p(\mathbb{H}^m)},
\end{equation}
where the left-hand side is a shorthand for $$\|\zeta_{x_0} \varphi \|_{L^p(\mathbb{R}^m)} + \|\nabla_e (\zeta_{x_0} \varphi)\|_{L^p(\mathbb{R}^m)};$$ here $\nabla_e$ denotes the Euclidean gradient of a function. (\ref{eq:W1pnorm}) holds because by the support of $\zeta_{x_0}$, we have
$$
\| \zeta_{x_0} \varphi \|_{L^p(\mathbb{R}^m)} \simeq \| \zeta_{x_0} \varphi \|_{L^p(\mathbb{H}^m)} \leq \|\nabla_g \varphi\|_{L^p(\mathbb{H}^m)},
$$ 
and
\begin{align*}
\|\nabla_e (\zeta_{x_0} \varphi)\|_{L^p(\mathbb{R}^m)}
\leq & \|(\nabla_e \zeta_{x_0}) \varphi \|_{L^p(\mathbb{R}^m)} + \|\zeta_{x_0} (\nabla_e \varphi)\|_{L^p(\mathbb{R}^m)} \\
\leq & C (\|\varphi\|_{L^p(\mathbb{H}^m)} + \|\nabla_g \varphi\|_{L^p(\mathbb{H}^m)}) \\
\leq &  C \|\nabla_g \varphi\|_{L^p(\mathbb{H}^m)}
\end{align*}
where we have used Lemma~\ref{lem:SobHn} in the last inequalities (note that Lemma~\ref{lem:SobHn} applies now since $m \geq 2$). In particular, by Morrey's inequality on $\mathbb{R}^m$, we get, from (\ref{eq:W1pnorm}), that 
$$
|\zeta_{x_0}(x) \varphi(x) - \zeta_{x_0}(y) \varphi(y)| \leq C \|\nabla_g \varphi\|_{L^p(\mathbb{H}^m)} |x-y|^{1-\frac{m}{p}}
$$
for all $x,y \in \mathbb{R}^m$. Taking $x = x_0$ and $y \in \mathbb{H}^m$ such that $d(y,x_0) = 2$, we get that 
$$
|\varphi(x_0)| \leq C \|\nabla_g \varphi\|_{L^p(\mathbb{H}^m)}.
$$
Since the isometry group of $\mathbb{H}^m$ acts transitively on $\mathbb{H}^m$, and since the right hand side of the above inequality is invariant under isometries, we obtain
$$
|\varphi(x)| \leq C \|\nabla_g \varphi\|_{L^p(\mathbb{H}^m)}.
$$
for all $x \in \mathbb{H}^m$, and hence (\ref{eq:MorreyHmLinfty}).

In particular, in view of (\ref{eq:MorreyHmLinfty}), when \(\lambda \ge 1\), it suffices to take \(\varphi_1 = \varphi\) and \(\varphi_2 = 0\). We then get the desired estimates for $\varphi_1$ and $\varphi_2$ trivially.

On the other hand, suppose now $0 < \lambda < 1$. We fix a compactly supported smooth function $\eta \in C^\infty_c (\mathbb{R}^m)$, with $$\int_{\mathbb{R}^m} \eta(v) \dif v = 1.$$
For $x= (x', x_m) \in \mathbb{H}^m$, we define
$$
\varphi_2(x) = \int_{\mathbb{R}^{m}} \varphi(x'+x_m e^{v_m} v', x_m e^{v_m}) \lambda^{-m} \eta(\lambda^{-1} v ) \dif v
$$
(where we wrote $v = (v',v_m) \in \mathbb{R}^{m-1} \times \mathbb{R}$), and define
$$
\varphi_1(x) = \varphi(x) - \varphi_2(x).
$$
Note that $\varphi_2$ is smooth with compact support on $\mathbb{H}^m$, and hence so is $\varphi_1$. Now for $i = 1, 2, \dots, m-1$, we have
$$
(e_i \varphi_2)(x,y) = \int_{\mathbb{R}^{m}} e^{-v_m} (e_i \varphi)(x'+x_m u' e^{v_m}, x_m e^{v_m}) \lambda^{-m} \eta(\lambda^{-1}v) \dif v.
$$
Since $v \mapsto \eta(\lambda^{-1} v)$ has support uniformly bounded with respect to $0 < \lambda < 1$, we have $e^{-v_m} \leq C$ on the support of the integral, where $C$ is independent of $0 < \lambda < 1$. 
Hence by H\"older's inequality, we have
\begin{align*}
&\|e_i \varphi_2\|_{L^{\infty}(\mathbb{H}^m)}\\
 \leq & C \left( \int_{\mathbb{R}^{m}} |e_i \varphi|^p(x'+x_m e^{v_m} v', x_m e^{v_m}) \dif v \right)^{\frac{1}{p}} \left\| \lambda^{-m} \eta(\lambda^{-1}v) \right\|_{L^{p'}(\dif v)}\\
= &  C \lambda^{-\frac{m}{p}} \left( \int_{\mathbb{R}^{m}_+} |e_i \varphi|^p (z) \frac{\dif z }{z_m^{m}} \right)^{\frac{1}{p}},
\end{align*}
the last line following from the changes of variables 
$z_m = e^{v_m}$, and then $z' = x' + z_m v'$.
We thus see that
$$
\|e_i \varphi_2\|_{L^{\infty}(\mathbb{H}^m)} \leq C \lambda^{-\frac{m}{p}} \|e_i \varphi\|_{L^p(\mathbb{H}^m)},
$$
as desired.

Furthermore, when $i = m$,
\begin{multline*}
(e_m \varphi_2)(x)  
= \int_{\mathbb{R}^{m}} \Bigl[(e_m \varphi)(x'+x_m v'e^{v_m}, x_m e^{v_m})\\
+ \sum_{i=1}^{m-1} v_i (e_i\varphi) (x'+x_m v'e^{v_m}, x_m e^{v_m})\Bigr]  \lambda^{-m} \eta( \lambda^{-1}v) \dif v.
\end{multline*}
Using that $|v_i| \leq C$ on the support of the integrals (uniformly in $0 < \lambda < 1$), and H\"older's inequality as above, we see that
$$
\|e_m \varphi_2\|_{L^{\infty}(\mathbb{H}^m)} \leq C \lambda^{-\frac{m}{p}} \|\nabla_g \varphi\|_{L^p(\mathbb{H}^m)},
$$
as desired.

Finally, to estimate $\varphi_1$, note that
$$
\varphi(x) = \lim_{\lambda \to 0^+}  \int_{\mathbb{R}^{m}} \varphi(x' +x_m e^{v_m} v', x_m e^{v_m}) \lambda^{-m} \eta(\lambda^{-1}v) \dif v.
$$
Hence
\begin{equation*}
\begin{split}
\varphi_1(x) &= \varphi(x) - \varphi_2(x) \\
&= -\int_0^{\lambda} \int_{\mathbb{R}^m} \varphi(x'+x_m e^{v_m} v', x_m e^{v_m}) \frac{d}{ds} \left[ s^{-m} \eta(s^{-1}v) \right] \dif v  \dif s.
\end{split}
\end{equation*}
But
\begin{equation*}
-\frac{d}{ds} \left[ s^{-m} \eta( s^{-1}v) \right] 
= \sum_{i=1}^{m} \frac{\partial}{\partial v_i} \left[ s^{-m} (v_i\eta)  (s^{-1}v) \right]
\end{equation*}
so we can plug this back in the equation for $\varphi_1$, and integrate by parts in $v$. Now
$$
\frac{\partial}{\partial v_i} [\varphi(x'+x_m e^{v_m} v', x_m e^{v_m})] = (e_i \varphi)(x'+x_m e^{v_m} v' ,x_m e^{v_m}), \quad i = 1, \dots, m - 1.
$$
and 
\begin{multline*}
 \frac{\partial}{\partial v_m} [\varphi(x'+x_m e^{v_m} v', x_m e^{v_m})]
 =(e_m \varphi)(x'+x_m e^{v_m} v' ,x_m e^{v_m})\\
 + \sum_{i = 1}^{m - 1} v_i  (e_i \varphi)(x'+x_m e^{v_m} v' ,x_m e^{v_m})
\end{multline*}
Hence
\begin{align*}
&\varphi_1(x) \\
=& -\int_0^{\lambda}  \sum_{i=1}^{m} \int_{\mathbb{R}^m} (e_i \varphi)(x' + x_m e^{v_m} v',x_m e^{v_m}) s^{-m} (v_i \eta)(s^{-1}v) \dif v \dif s \\
&-\int_0^{\lambda}\sum_{i=1}^{m - 1} \int_{\mathbb{R}^m} v_i (e_i \varphi)(x' + x_m e^{v_m} v',x_m e^{v_m}) s^{-m} (v_i \eta)(s^{-1}v) \dif v \dif s .
\end{align*}
When $0 < \lambda < 1$, the integral in $v$ in each term can now be estimated by H\"older's inequality, yielding
\begin{align*}
\|\varphi_1\|_{L^{\infty}(\mathbb{H}^n)}
\leq & \int_0^{\lambda} C s^{-\frac{m}{p}} \|\nabla_g \varphi\|_{L^p(\mathbb{H}^m)} \dif s \\
= & C \lambda^{1-\frac{m}{p}} \|\nabla_g \varphi\|_{L^p(\mathbb{H}^m)}.
\end{align*}
This completes the proof of Lemma~\ref{lem:decompHmsimple}.
\end{proof}

\appendix

\section{Indirect proof of Theorem~\ref{thm:hyp}}
\label{appendix}

We will give an alternative proof of Theorem~\ref{thm:hyp} from the following variant of Theorem~\ref{thm:vs}, whose proof can be found, for instance, in Van Schaftingen \cite{MR2078071} (it can also be deduced by a small modification of the proof we gave above of Theorem~\ref{thm:vs}):

\begin{prop}[Van Schaftingen \cite{MR2078071}] \label{prop:vs}
Suppose $f$ is a smooth vector field on $\mathbb{R}^n$ (not necessarily $\operatorname{div}\, f = 0$). Then for any compactly supported smooth vector field $\phi$ on $\mathbb{R}^n$, we have
\begin{equation} \label{eq:mainest_loword}
\left| \int_{\mathbb{R}^n} \langle f , \phi \rangle \right| \leq C \left( \|f\|_{L^1} \|\nabla \phi\|_{L^n} + \|\operatorname{div} f\|_{L^1} \|\phi\|_{L^n} \right),
\end{equation}
where $\langle \cdot, \cdot \rangle$ is the pointwise Euclidean inner product of two vector fields in $\mathbb{R}^n$.
\end{prop}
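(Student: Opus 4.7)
The plan is to make a small modification of the proof of Theorem~\ref{thm:vs} carried out above, tracking the additional contribution of $\operatorname{div} f$ throughout. The first ingredient is an upgraded version of Lemma~\ref{lem:parts}: without the hypothesis $\operatorname{div} f = 0$, the same divergence-theorem computation yields, for any compactly supported smooth $\psi$ on $\Rn$,
\[
\int_{\S} \langle f, \nu\rangle \psi \dif \sigma
= -\int_{\Rn \setminus \mathbb{B}^n} \langle f, \nabla \psi\rangle \dif x
- \int_{\Rn \setminus \mathbb{B}^n} \psi\, (\operatorname{div} f) \dif x.
\]
The second ingredient is an upgraded version of Lemma~\ref{lem:decomp} in which the extension $\tilde\varphi_2$ is \emph{compactly supported} on $\Rn \setminus \mathbb{B}^n$, while still satisfying the same gradient bound up to constants together with the additional estimate $\|\tilde\varphi_2\|_{L^\infty(\Rn \setminus \mathbb{B}^n)} \leq C \|\phi\|_{W^{1,n}(\S)}$; this can be arranged by multiplying the homogeneous-of-degree-zero extension of the original Lemma~\ref{lem:decomp} by a smooth radial cutoff that equals $1$ on $\S$ and vanishes outside the annulus $A := \{1 \leq |x| \leq 2\}$, and the $L^\infty$ bound then follows from Morrey's embedding on $\S$.

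Feeding these ingredients into the proof of Proposition~\ref{prop:Rn} with $\psi := \tilde\varphi_2$ and optimizing $\lambda$ as before yields a variant of Proposition~\ref{prop:Rn} with two summands on the right-hand side: the usual $C \|f\|_{L^1(\S)}^{1-1/n} \|f\|_{L^1(A)}^{1/n} \|\phi\|_{W^{1,n}(\S)}$ coming from the gradient term, plus an extra $C \|\operatorname{div} f\|_{L^1(A)} \|\phi\|_{W^{1,n}(\S)}$ coming from the new divergence term. I would then average this over all Euclidean translates of $\S$ as in the proof of Theorem~\ref{thm:vs}. The previously treated first summand goes through essentially unchanged, producing the bound $C \|f\|_{L^1} (\|\phi\|_{L^n} + \|\nabla\phi\|_{L^n})$. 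The new divergence contribution becomes
\[
\int_{\Rn} \|\operatorname{div} f\|_{L^1(z + A)} \|\phi(z+\cdot)\|_{W^{1,n}(\S)} \dif z,
\]
which I handle as follows: because the annulus $A$ is bounded, Minkowski's integral inequality gives
\[
\bigl\|z \mapsto \|\operatorname{div} f\|_{L^1(z+A)}\bigr\|_{L^{n/(n-1)}(\Rn)} \leq C \|\operatorname{div} f\|_{L^1(\Rn)};
\]
H\"older's inequality in $z$ with the pair $(n/(n-1), n)$, combined with the Fubini computation from the proof of Theorem~\ref{thm:vs} for the $W^{1,n}(\S)$ factor, therefore bounds this new contribution by $C \|\operatorname{div} f\|_{L^1} (\|\phi\|_{L^n} + \|\nabla\phi\|_{L^n})$. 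Altogether this gives
\[
\Bigl|\int_{\Rn} \langle f, \phi\rangle\Bigr| \leq C (\|f\|_{L^1} + \|\operatorname{div} f\|_{L^1}) (\|\nabla\phi\|_{L^n} + \|\phi\|_{L^n}).
\]

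Finally, the two unwanted cross terms $\|f\|_{L^1} \|\phi\|_{L^n}$ and $\|\operatorname{div} f\|_{L^1} \|\nabla\phi\|_{L^n}$ are eliminated by applying the above inequality to the dilations $f_\varepsilon(x) := \varepsilon^{-n} f(\varepsilon^{-1}x)$ and $\phi_\varepsilon(x) := \phi(\varepsilon^{-1}x)$, which satisfy $\int \langle f_\varepsilon, \phi_\varepsilon\rangle = \int \langle f, \phi\rangle$, $\|f_\varepsilon\|_{L^1} = \|f\|_{L^1}$, $\|\operatorname{div} f_\varepsilon\|_{L^1} = \varepsilon^{-1}\|\operatorname{div} f\|_{L^1}$, $\|\phi_\varepsilon\|_{L^n} = \varepsilon \|\phi\|_{L^n}$ and $\|\nabla\phi_\varepsilon\|_{L^n} = \|\nabla\phi\|_{L^n}$; optimizing $\varepsilon>0$ via the AM--GM inequality then absorbs the two cross terms into the two desired ones. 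The main obstacle that I expect is precisely the averaging of the divergence contribution: the homogeneous-of-degree-zero extension of the original Lemma~\ref{lem:decomp} leaves $\tilde\varphi_2$ bounded but not compactly supported, so $\int \tilde\varphi_2 \operatorname{div} f$ would live on the unbounded region $\Rn \setminus \mathbb{B}^n$, and the natural estimate $|\int \tilde\varphi_2 \operatorname{div} f| \lesssim \|\operatorname{div} f\|_{L^1(\Rn \setminus \mathbb{B}^n)} \|\phi\|_{W^{1,n}(\S)}$ fails to decay in the translation variable and hence cannot survive the averaging in $z$. The switch to a compactly supported $\tilde\varphi_2$, together with Minkowski's integral inequality, is what makes the averaging step go through.
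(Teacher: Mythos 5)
The paper does not actually prove Proposition~\ref{prop:vs}; it only cites \cite{MR2078071} and remarks that the estimate can be deduced by a small modification of the proof of Theorem~\ref{thm:vs}. Your argument is a concrete and essentially correct realization of that remark, so there is no detailed proof in the paper to compare against line by line. You correctly isolate the one genuinely non-obvious point: with the homogeneous-of-degree-zero extension $\tilde{\varphi}_2$ supplied by Lemma~\ref{lem:decomp}, the new term $\int_{\Rn\setminus\mathbb{B}^n}\tilde{\varphi}_2\,\operatorname{div} f$ produced by the upgraded integration by parts is supported on all of $\Rn\setminus\mathbb{B}^n$, so its translation average cannot be closed up to $\|\operatorname{div} f\|_{L^1}$. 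Truncating $\tilde{\varphi}_2$ to the annulus $A=\{1\le|x|\le2\}$, bounding $\bigl\|z\mapsto\|\operatorname{div} f\|_{L^1(z+A)}\bigr\|_{L^{n/(n-1)}}$ by $|A|^{(n-1)/n}\|\operatorname{div} f\|_{L^1}$ via Minkowski's integral inequality (equivalently Young's convolution inequality), pairing with $\|\phi(z+\cdot)\|_{W^{1,n}(\S)}$ via H\"older, and finally rescaling with $f_\varepsilon,\phi_\varepsilon$ and optimizing $\varepsilon$ to remove the two cross terms all work as you claim.

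Two small points you gloss over, neither fatal. First, after multiplying by the cutoff $\chi$, the gradient $\nabla(\chi\tilde{\varphi}_2)$ picks up the $\lambda$-independent extra contribution $(\nabla\chi)\tilde{\varphi}_2$, of size $\lesssim\|\phi\|_{W^{1,n}(\S)}$; so your modified Proposition~\ref{prop:Rn} in fact carries a third summand $C\|f\|_{L^1(A)}\|\phi\|_{W^{1,n}(\S)}$. This is harmless: after averaging it is handled exactly like the divergence term (Minkowski plus H\"older with exponents $(n/(n-1),n)$) and produces another copy of $\|f\|_{L^1}(\|\nabla\phi\|_{L^n}+\|\phi\|_{L^n})$, which the rescaling step already absorbs. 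Second, a clean statement of the upgraded Lemma~\ref{lem:decomp} should make explicit that the $L^\infty$ bound on $\tilde{\varphi}_2$ needs control of $\|\varphi\|_{W^{1,n}(\S)}$ and not merely $\|\nabla_{\S}\varphi\|_{L^n(\S)}$ (a nonzero constant has vanishing gradient but nonzero $\varphi_2$); this is invisible in the application since there $\varphi=\langle\phi,\nu\rangle$ and everything is measured in $\|\phi\|_{W^{1,n}(\S)}$, but it should be recorded when formulating the lemma.
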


To prove Theorem~\ref{thm:hyp}, we consider a function \(\zeta \in C^\infty_c (\mathbb{R})\)
and we define for \(\alpha \in \mathbb{H}^n\) the function \(\zeta_\alpha : \mathbb{H}^n \to \mathbb{R} \) by
\[
   \zeta_\alpha (x) = \zeta \bigl(d (x, \alpha)\bigr).
\]
We assume that 
\[
 \int_{\mathbb{H}^n} \zeta_\alpha^2 (x)\dif V_g (\alpha) = 1.
\]
Now given vector fields $f$ and $\phi$ as in Theorem~\ref{thm:hyp}, we write
$$
\int_{\mathbb{H}^n} \langle f,\phi \rangle_g \dif V_g = \int_{\mathbb{H}^n}  \int_{\mathbb{H}^n} \langle \zeta_{\alpha} f(x), \zeta_{\alpha} \phi(x) \rangle_g \dif V_g (z) \dif V_g (\alpha).
$$
If $\alpha = (0,1) \in \mathbb{H}^n$, then
$$
\int_{\mathbb{H}^n} \langle \zeta_{\alpha} f(x), \zeta_{\alpha} \phi(x) \rangle_g \dif V_g
= \int_{\mathbb{R}^n_+}
\langle \zeta_{\alpha} f, \frac{\zeta_{\alpha} \phi}{x_n^{n+2}} \rangle_e\dif x ,
$$
where $\langle \cdot,\cdot \rangle_e$ is the Euclidean inner product of two vectors. Hence by Proposition~\ref{prop:vs}, this last integral is bounded by
$$
C \left( \|\zeta_{\alpha} f\|_{L^1(\mathbb{R}^n)} \left\| \nabla_e \left( \frac{\zeta_{\alpha} \phi}{x_n^{n + 2}} \right) \right\|_{L^n(\mathbb{R}^n)} + \|\langle \nabla_e(\zeta_{\alpha}), f \rangle_e\|_{L^1(\mathbb{R}^n)} \left\|  \frac{\zeta_{\alpha} \phi}{x_n^{n + 2}} \right\|_{L^n(\mathbb{R}^n)} \right)
$$
where we write $\nabla_e$ to emphasize that the gradients are with respect to the Euclidean metric. Now on the support of $\zeta_{\alpha}$, we have $|x_n| \simeq 1$, so altogether we get
\begin{align}  \label{eq:oneballHn}
&\left| \int_{\mathbb{H}^n} \langle \zeta_{\alpha} f, \zeta_{\alpha} \phi \rangle_g \dif V_g  \right| \\
\leq &
C \left( \|\zeta_{\alpha} f\|_{L^1(\mathbb{H}^n)} + \|\langle \nabla_g(\zeta_{\alpha}), f \rangle_g\|_{L^1(\mathbb{H}^n)} \right)
 \left( \| \nabla_g \phi \|_{L^n(\mathbb{H}^n)} +  \| \phi \|_{L^n(\mathbb{H}^n)} \right). \notag
\end{align}
This remains true even if $\alpha \ne (0,1)$, since there is an isometry mapping \(\alpha\) to $(0,1)$, and since \eqref{eq:oneballHn} is invariant under isometries of $\mathbb{H}^n$. By integrating with respect to $\alpha \in \mathbb{H}^n$,  we see that
$$ 
\left| \int_{\mathbb{H}^n} \langle  f,  \phi \rangle_g \dif V_g \right|
\leq
C \|f\|_{L^1(\mathbb{H}^n)}  \left( \| \nabla_g \phi \|_{L^n(\mathbb{H}^n)} +  \| \phi \|_{L^n(\mathbb{H}^n)} \right).
$$ 
We now use Lemma \ref{lem:SobHn} to bound $\|\phi\|_{L^n(\mathbb{H}^n)}$ by $\| \nabla_g \phi \|_{L^n(\mathbb{H}^n)}$. This concludes our alternative proof of Theorem~\ref{thm:hyp}.

\begin{bibdiv}
\begin{biblist}

\bib{MR1913720}{article}{
   author={Bourgain, Jean},
   author={Brezis, Ha{\"{\i}}m},
   title={Sur l'\'equation ${\rm div}\,u=f$},
   journal={C. R. Math. Acad. Sci. Paris},
   volume={334},
   date={2002},
   number={11},
   pages={973--976},
   issn={1631-073X},
}

\bib{MR1949165}{article}{
   author={Bourgain, Jean},
   author={Brezis, Ha{\"{\i}}m},
   title={On the equation ${\rm div}\, Y=f$ and application to control of
   phases},
   journal={J. Amer. Math. Soc.},
   volume={16},
   date={2003},
   number={2},
   pages={393--426},
   issn={0894-0347},
}

\bib{MR2057026}{article}{
   author={Bourgain, Jean},
   author={Brezis, Ha{\"{\i}}m},
   title={New estimates for the Laplacian, the div-curl, and related Hodge
   systems},
   journal={C. R. Math. Acad. Sci. Paris},
   volume={338},
   date={2004},
   number={7},
   pages={539--543},
   issn={1631-073X},
}

\bib{MR2293957}{article}{
   author={Bourgain, Jean},
   author={Brezis, Ha{\"{\i}}m},
   title={New estimates for elliptic equations and Hodge type systems},
   journal={J. Eur. Math. Soc. (JEMS)},
   volume={9},
   date={2007},
   number={2},
   pages={277--315},
   issn={1435-9855},
}

\bib{MR1688256}{book}{
   author={Hebey, Emmanuel},
   title={Nonlinear analysis on manifolds: Sobolev spaces and inequalities},
   series={Courant Lecture Notes in Mathematics},
   volume={5},
   publisher={New York University, Courant Institute of Mathematical
   Sciences, New York; American Mathematical Society, Providence, RI},
   date={1999},
   pages={x+309},
   isbn={0-9658703-4-0},
   isbn={0-8218-2700-6},
}

\bib{MR2122730}{article}{
   author={Lanzani, Loredana},
   author={Stein, Elias M.},
   title={A note on div curl inequalities},
   journal={Math. Res. Lett.},
   volume={12},
   date={2005},
   number={1},
   pages={57--61},
   issn={1073-2780},
}

\bib{MR1320504}{book}{
   author={Li, Peter},
   title={Lecture notes on geometric analysis},
   series={Lecture Notes Series},
   volume={6},
   publisher={Seoul National University, Research Institute of Mathematics,
   Global Analysis Research Center, Seoul},
   date={1993},
   pages={iv+90},
}

\bib{MR2038078}{article}{
   author={Van Schaftingen, Jean},
   title={A simple proof of an inequality of Bourgain, Brezis and Mironescu},
   journal={C. R. Math. Acad. Sci. Paris},
   volume={338},
   date={2004},
   number={1},
   pages={23--26},
   issn={1631-073X},
}

\bib{MR2078071}{article}{
   author={Van Schaftingen, Jean},
   title={Estimates for $L^1$-vector fields},
   journal={C. R. Math. Acad. Sci. Paris},
   volume={339},
   date={2004},
   number={3},
   pages={181--186},
   issn={1631-073X},
}

\bib{MR3298002}{article}{
   author={Van Schaftingen, Jean},
   title={Limiting Bourgain-Brezis estimates for systems of linear
   differential equations: theme and variations},
   journal={J. Fixed Point Theory Appl.},
   volume={15},
   date={2014},
   number={2},
   pages={273--297},
   issn={1661-7738},
}

\end{biblist}
\end{bibdiv}

\end{document}